\numberwithin{equation}{section}
\theoremstyle{plain}
\newtheorem{theorem}{Theorem}[section]
\newtheorem{proposition}[theorem]{Proposition}
\theoremstyle{definition}
\newtheorem{remark}[theorem]{Remark}
\newtheorem*{setup}{Context and basic assumption}
\newtheorem*{assumptions}{Possible additional assumptions}
\newtheorem*{summaryofresults}{Summary of results}
\newcommand{\R}{{\mathbb R}}
\newcommand{\Z}{{\mathbb Z}}
\newcommand{\C}{{\mathbb C}}
\newcommand{\N}{{\mathbb N}}
\newcommand{\T}{{\mathbb T}}
\newcommand{\F}{{\mathcal F}}
\newcommand{\D}{{\mathcal D}}
\newcommand{\Ddual}{\D^\prime}
\newcommand{\dist}{{\Xi}}
\newcommand{\Lag}{{\mathcal L}}
\newcommand{\eig}{{\varepsilon}}
\newcommand{\supp}{{\operatorname {supp}\,}}
\newcommand{\cl}{{\operatorname {cl}\,}}
\newcommand{\dirprod}{{\prod_{\lambda\in\Lambda}L_\lambda}}
\newcommand{\familyoffunctions}{{\Gamma}}
\newcommand{\seminormfunction}{{\gamma}}
\renewcommand{\S}{{\mathcal S}}
\begin{document}
\title[Local spectral radius formulas for unbounded operators]{{Local spectral radius formulas for a class of unbounded operators on Banach spaces}}
\author{Nils Byrial Andersen}
\address{Department of Mathematics,
Aarhus University,
Ny Munkegade 118,
Building 1530,
DK-8000 Aarhus C,
Denmark}
\email{byrial@imf.au.dk}
\author{Marcel de Jeu}
\address{Mathematical Institute,
Leiden University,
P.O. Box 9512,
2300 RA Leiden,
The Netherlands}
\email{mdejeu@math.leidenuniv.nl}
%\date{\today\quad\xxivtime}
\subjclass[2000]{Primary 47A11; Secondary 34L10, 35P10}
\keywords{Unbounded operator, local spectrum, local spectral radius formula, differential operator, eigenfunction expansion, special functions}

\begin{abstract}We exhibit unbounded operators on Banach spaces having the single-valued extension property, for which the local spectrum at suitable points can be determined, and for which a local spectral radius formula holds, analogous to that for a bounded operator on a Banach space with the single-valued extension property. Such an operator can occur as (an extension of) a differential operator which, roughly speaking, can be diagonalised on its domain of smooth test functions via a discrete transform which is an isomorphism of topological vector spaces between the domain, in its own topology, and a sequence space. We give examples (constant coefficient differential operators on the $d$-torus, Jacobi operators, the Hermite operator, Laguerre operators) and indicate further perspectives.
\end{abstract}
\maketitle

\section{Introduction and results}\label{sec:intro}

We are concerned with a class of unbounded operators on Banach spaces for which questions concerning the single-valued extension property, local spectra and local spectral radius formulas can be answered satisfactorily. The operators in question are, roughly speaking, those operators that can be diagonalised on their domain via a discrete transform, with very good convergence results for the inverse transform, or extensions of such operators. Such operators are not uncommon in the study of differential operators, on Euclidean spaces or on compact symmetric spaces, and it is for this reason that we hope that our results are not only a further step in the investigation of local spectral radius formulas for unbounded operators, but are of some interest in concrete cases as well, also beyond the examples we will present in this paper.

In this Section, we start with an introduction in Section~\ref{subsec:introductionandliterature}, also collecting what seems to be known about local spectral radius formulas for unbounded operators in general. To our knowledge, this is rather limited. We then continue in Section~\ref{subsec:frameworkandresults} with the general framework in which our results hold, and we summarise those results. The general framework is motivated by, in particular, differential operators, and we comment how these fit in. Section~\ref{sec:resultsandproofs} contains the precise statements and proofs. In Section~\ref{sec:examples} several examples are given, applying the results in Section~\ref{sec:resultsandproofs} along the lines as discussed in Section~\ref{subsec:frameworkandresults}. In some of these, estimates obtained in special function theory are used. In our final example, we indicate how further results can be obtained on basis of not so well known, but quite interesting, work of Zerner's.

\subsection{Introduction and existing literature}\label{subsec:introductionandliterature}
Let $X$ be a Banach space, $\D\subset X$ a linear subspace,
and $T: \D \to X$ a possibly unbounded linear operator, not necessarily closed.  A point $z_0\in\C$ is said to be in the local resolvent set of $x\in X$,
denoted by $\rho_T(x)$, if there is an open neighborhood $U$ of $z_0$
in $\C$, and an analytic function $\phi: U\to {\D}$, sending $z$ to
$\phi_z$, such that
\begin{equation}\label{eq:localresolvent}
(T-z)\phi_z=x\qquad(z\in U).
\end{equation}
The local spectrum $\sigma_T(x)$ of $T$ at $x$ is the complement of $\rho_T(x)$ in $\C$; it is a closed subset of $\C$.
If $x\in\D(T^\infty)=\cap_{n=0}^\infty\D(T^n)$, then the local spectral radius $r_T(x)$ of $T$ at $x$ is defined as
\begin{equation*}%\label{eq:localspectralradius}
r_T(x) = \limsup_{n\to \infty} \| T^n x\|^{1/n}
\end{equation*}
in the extended positive real numbers. The terminology is, to some extent, not entirely optimal, because the extended positive real number
\[
\sup\{|z| : z\in\sigma_T(x)\}
\]
could justifiably also be called the local spectral radius of $T$ at $x$, and perhaps even more so. If $X$ is complex, $\D=X$ and $T$ is bounded, then the two globally defined real numbers $\lim_{n\to \infty} \Vert T^n \Vert^{1/n}$ and $\max\{|z| : z\in\sigma_T\}$, where $\sigma_T$ is the spectrum of $T$ in the Banach algebra of bounded operators on $X$, are of course always equal by the classical spectral radius formula, so ambiguity in the terminology is unlikely to arise, but for their local counterparts things are different. We will now briefly discuss this.

If $T$ is closed, then
\begin{equation}\label{eq:inequalityforclosedoperators}
\sup\{|z| : z\in\sigma_T(x)\}\leq\limsup_{n\to\infty}\Vert T^n x\Vert^{1/n}\qquad(x\in\D(T^\infty))
\end{equation}
always holds in the extended positive real numbers.\footnote{This statement is similar in spirit to \cite[Proposition~IV.3.10]{Vasilescu}, where it is assumed that a local resolvent exists and is analytic on an open neighborhood of $\infty$ on the Riemann sphere. In that case, the hypothesis that $x\in\D$ is sufficient, and one concludes that actually $x\in\D(T^\infty)$. Moreover, the right hand side in \eqref{eq:inequalityforclosedoperators} is then necessarily finite.} Indeed, we may assume that the right hand side in \eqref{eq:inequalityforclosedoperators} is finite. In that case, for $|z|>r_T(x)$, the Neumann-type series
\begin{equation}\label{eq:Neumannseries}
- \sum_{n=0}^\infty \frac{T^n x}{z^{n+1}}
\end{equation}
converges in $X$, and from the fact that $T$ is closed it follows easily that the sum $\phi_z$, which clearly depends analytically on $z$ on $\{z\in\C : |z|>r_T(x)\}$, is in fact in $\D$, and satisfies $(T-z)\phi_z=x$. Hence $z\in\rho_T(x)$ for all $z>r_T(x)$, as desired.

The reverse inequality in \eqref{eq:inequalityforclosedoperators}, and hence equality, need not always hold, however, not even for globally defined bounded operators, and we will briefly review a few relevant results.
Recall that an operator $T$ with domain $\D$ is said to have the single-valued extension property if, for every
non-empty open subset $U\subset\C$, the only analytic solution $\phi:U\to \D$ of the
equation $(T-z)\phi_z=0$ $(z\in U)$ is the zero solution. This is equivalent to requiring that the analytic
local resolvent function $\phi$ in \eqref{eq:localresolvent} is determined uniquely, so that we can speak of ``the" analytic local resolvent function on $\rho_T(x)$. If $T$ is globally defined and bounded, then this property is also equivalent to 0 being the only element in $\D$ with empty local spectrum \cite[Proposition 1.2.16]{LaurNeu}. If $\D=X$, $T$ is bounded, and $T$ has the single-valued extension property, then
\begin{equation}\label{eq:localspectralradiusformulawithlimsup}
\limsup_{n\to\infty}\Vert T^n x\Vert^{1/n}=\sup\{|z| : z\in\sigma_T(x)\}\qquad(x\in X),
\end{equation}
cf.\ \cite[Proposition~3.3.13]{LaurNeu}.
Namely, the annulus of convergence of the Neumann-type series in \eqref{eq:Neumannseries} is $\{z : |z| > \limsup_{n\to \infty} \| T^n x\|^{1/n}\}$, and, since $T$ has the single-valued extension property, the sum of the series on this annulus necessarily equals the local resolvent on that open set. Hence from the standard theory of vector valued analytic functions it follows that this annulus must coincide with $\{z : |z| > \sup\{|z| : z\in\sigma_T(x)\} \}$, yielding \eqref{eq:localspectralradiusformulawithlimsup}. Naturally, the supremum in the right hand side of \eqref{eq:localspectralradiusformulawithlimsup} is a maximum if $T$ is bounded, but for unbounded operators this need not be the case. For obvious reasons, we will refer to the validity of \eqref{eq:localspectralradiusformulawithlimsup} in the extended positive real numbers as a local spectral radius formula for $T$ at $x$, in the general context where $X$ is a (complex) Banach space, $\D\subset X$ is a linear subspace, $T:\D\to X$ is a possibly unbounded and not necessarily closed operator with domain $\D$, and $x\in\D(T^\infty)$, and where the analytic local resolvent functions are assumed to take their values in $\D$.

More is known for globally defined bounded operators than just the validity of \eqref{eq:localspectralradiusformulawithlimsup}, for all $x\in X$, if $T$ has the single-valued extension property. If $T$ has this property, then, by \cite{PruPu, Vasipaper, Vrb}, the set of those $x$ in $X$ for
which $\sigma_T(x)\neq \sigma_T$ is of the first category
in $X$. By \cite[Proposition~3.3.14]{LaurNeu}, it is always
true, also in the absence of the single-valued extension property, that the set of $x\in X$ for
which $r_T(x)$ is equal to the spectral radius of $T$ is of the
second category in $X$. If $T$ has Bishop's property ($\beta$)
(see \cite[Definition~1.2.5]{LaurNeu} -- it is immediate that
property ($\beta$) implies the single-valued extension property), then, by
\cite[Proposition~3.3.17]{LaurNeu},
$r_T(x)=\lim_{n\to\infty}\Vert T^n x\Vert^{1/n}$, for all $x$ in
$X$, so that the local spectral radius formula
\eqref{eq:localspectralradiusformulawithlimsup} holds in a stronger form as
\begin{equation}\label{eq:localspectralradiusformulawithlim}
\lim_{n\to\infty}\Vert T^n x\Vert^{1/n}=\max\{|z| : z\in\sigma_T(x)\}\qquad(x\in X).
\end{equation}
Furthermore, a decomposable (see
\cite[Definition~1.1.1]{LaurNeu}) operator has property
($\beta$) by \cite[Theorem~1.2.7]{LaurNeu}, hence
\eqref{eq:localspectralradiusformulawithlim} holds for decomposable
operators.

For globally defined bounded operators, therefore, there exist general theorems asserting the validity of the local spectral radius formulas \eqref{eq:localspectralradiusformulawithlimsup} or \eqref{eq:localspectralradiusformulawithlim} for fairly general classes of operators. Much less seems to be known, however, for unbounded operators. Even for closed operators, we are not aware of results in that direction, even though there are dedicated monographs for local spectral theory for unbounded operators available, cf.\ \cite{ErdWang, Vasilescu}. The key tool in the case of globally defined bounded operators is the a priori knowledge that there exists, for all large enough $z$, an analytic local resolvent function given by \eqref{eq:Neumannseries}. This argument breaks down in the unbounded case and that may account for the (to our knowledge) absence of results asserting the validity of a local spectral radius formula for unbounded operators, in a generality comparable to that in the case of a globally defined bounded operator. As already mentioned, for closed operators, \cite[Proposion~4.9]{CJS} yields a stronger version of \eqref{eq:inequalityforclosedoperators}, but under a more restrictive hypothesis, and that is still only one of the two inequalities needed. Apart from that, only a few results related to (the two possible definitions of) the local spectral radius  for an unbounded operator at an element of its domain seem to be known, and these are of a more specialised nature. We will now discuss these.

First of all, if $T$ is a paranormal operator on domain $\D$ in a Hilbert space, i.e., an operator such that $\Vert Tx\Vert^2\leq\Vert T^2x\Vert \Vert x\Vert$, for all $x$ in $\D(T^2)$, then $\lim_{n\to\infty}\Vert T^n x\Vert^{1/n}$ exists in the extended positive real numbers; see \cite[Proposion~4.9]{CJS}. A possible relation with the local spectrum of $T$ at $x$ is still open to investigation, however.

The second example, again from \cite{CJS}, is for an algebraic operator $T$ on a domain $\D$ in a Hilbert space, i.e., an operator such that exists a non-zero polynomial $P\in\C[X]$ such that $p(T)x=0$ for all $x\in\D(T^{\textup{deg\,}P})$. In that case, one factors the unique monic minimal such polynomial as $(X-z_1)^{n_1}\cdots(X-z_m)^{n_m}$, and there is decomposition of $\D(T^\infty)$, which is assumed to be a non-zero subspace, as
\[
\D(T^\infty)=\bigoplus_{i=0}^m \textup{ker\,}(T_{\D(T^\infty)}-z_i)^{n_i},
\]
where $T_{\D(T^\infty)}$ denotes the restriction of $T$ to $\D(T^\infty)$. For $x$ in $\D(T^\infty)$, write $x=\sum_{i=0}^m x_i$, with $x_i\in\textup{ker\,}(T_{\D(T^\infty)}-z_i)^{n_i}$ $(i=1,\ldots,m)$. Then, by \cite[Proposition~6.2]{CJS},
\begin{equation}\label{eq:CJSformula}
\lim_{n\to\infty}\Vert T^n f\Vert^{1/n}=\max \{|z_j| : j=1,\ldots,m,\, x_j\neq 0\}.
\end{equation}
Although it is not investigated in \cite{CJS}, the analogy with the finite dimensional case suggests that the right hand side in \eqref{eq:CJSformula} is the local spectrum at $x$ of the restriction of $T$ to a suitable domain, in which case \eqref{eq:CJSformula} could be interpreted as a local spectral radius formula for that restriction.

The third example is taken from \cite{AdJ1}. If $P\in\C[X_1,\ldots,X_d]$ is a polynomial in $d$ variables, let $P(\partial)$ be the corresponding constant coefficient differential operator. Consider the Schwartz space $\S(\R^d)$ of rapidly decreasing functions as a subspace of $L^1(\R^d, dx)$. Then $P(\partial): \S(\R^d)\to L^1(\R^d)$ is closable, and we let $T$ be its closure. Naturally, $\S(\R^d)\subset\D(T^\infty)$. By \cite[Corollary~5.4]{AdJ1}, the unbounded operator $T$ on $L_1(\R^d)$ has the single-valued extension property, and, for $f\in\S(\R^d)$,
\begin{equation}\label{eq:localspectrumFourier}
\sigma_{T}(f)=\{P(i\lambda) : \lambda\in\supp \F f\}^\cl,
\end{equation}
where $A^{\cl}$ denotes the closure of a subset $A$ of the complex plane, and where the Fourier transform $\F f$ of $f$ is defined as
\[
\F f (\lambda) =\frac{1}{(2\pi)^{d/2}} \int _{\R^d} f(x) e^{-i \lambda\cdot x}\, dx \qquad (\lambda \in \R^d).
\]
Since, by \cite[Theorem~2.5]{AdJ1},
\begin{equation}\label{eq:limitexpressionFourier}
\lim_{n\to \infty}\| T^n f\| _1 ^{1/n} = \sup\left\{|z| : z\in\{P(i\lambda) : \lambda\in\supp \F f\}^\cl\right\}\qquad(f\in\S (\R^d)),
\end{equation}
we infer from the combination of \eqref{eq:localspectrumFourier} and \eqref{eq:limitexpressionFourier} the validity of the local spectral radius formula
\begin{equation}\label{eq:localspectralradiusformulaFouriercontinuous}
\lim_{n\to \infty}\| T^n f\| _1 ^{1/n} = \sup\{|z| : z\in\sigma_{T_1}(f)\}\qquad(f\in\S(\R^d))
\end{equation}
in the extended positive real numbers \cite[Corollary~5.4]{AdJ1}. Furthermore, although it was not stated as such in \cite{AdJ1}, \eqref{eq:localspectrumFourier} and \eqref{eq:localspectralradiusformulaFouriercontinuous} are, together with the assertion that $T$ has the single-valued extentions property, equally true, and with similar proofs, for $T=P(\partial)$ defined on the original domain $\D=\S(\R^d)$, viewed as a subspace of $L^p(\R^d)$, for all $1\leq p\leq\infty$. For $1<p\leq\infty$, the questions concerning the validity of \eqref{eq:localspectrumFourier} and the single-valued extension property of $T$, when one considers, as for $p=1$, a closed extension $T$ of $P(\partial)$, are open at the time of writing, cf.\ \cite[Conjecture~5.5]{AdJ1}.\footnote{The case $p=1$ is more manageable, because then the Fourier transform of an element of the domain of an arbitrary extension of $P(\partial)$ is a priori known to be a continuous function, cf.\ the proofs of Lemma~5.1 and 5.2 in \cite{AdJ1}.}
\\It should be noted here that the first result in the vein of \eqref{eq:limitexpressionFourier} was obtained (as late as 1990) by Bang \cite{Ba}, in one dimension and for the polynomial $P(t)=t$, and with proofs which are less elementary than those in \cite{AdJ1}. Several papers in this vein have appeared since, also for other ``diagonalising" transforms, with continuous or discrete spectral parameter. The corresponding analogues of \eqref{eq:limitexpressionFourier} are also known as ``real Paley-Wiener theorems", because they relate the growth rate of sequences such as $\{\Vert P(\partial)^n f\Vert_p\}_{n=1}^\infty$ to the support of $\F f$, just as for the complex Paley-Wiener theorems, but with everything taking place in the real domain. We refer to \cite[Section~4]{AdJ1} for an extensive overview of the literature on real Paley-Wiener theorems. The possible link with local spectral theory seems to have been worked out first in \cite[Section~5]{AdJ1}.

The fourth example is concerned with the local spectrum, at an element of $C^\infty(G)$, of a closed extension, in $L^p(G)$ ($1\leq p\leq\infty$), of an element of the center of the complexified universal enveloping algebra of a compact connected Lie group, with original domain $C^\infty(G)\subset L^p(G)$. In \cite{AdJ2}, using \cite{Sug}, the local spectrum of such an operator at a smooth function is determined, and, paralleling the result in \cite{AdJ1} for the Fourier transform on $\R^d$ as mentioned above, it is shown that the local spectral radius formula holds. In the case of the 1-torus and the operator $d/dt$, this formula was also obtained by Bang \cite{Ba}, but without its interpretation in terms of local spectral theory, and with different proofs. In Section~\ref{subsec:Fourier} we will return to this example. The key property that was used in \cite{AdJ2}, is that the (operator valued) Fourier transform on $G$ establishes a topological isomorphism between $C^\infty(G)$ and a space of rapidly decreasing (operator valued) sequences. We surmise, supported by the observation that results such as \cite[Theorem~0.3.4]{Helg} (for the $n$-sphere) indicate that this topological isomorphism property in \cite{Sug} may hold in greater generality, that the results in \cite{AdJ2} can be generalised, so that one can determine the local spectrum of (extensions of) invariant differential operators at $C^\infty$-functions on compact symmetric spaces, viewed as elements of $L^p$-spaces ($1\leq p\leq\infty)$ with respect to the invariant measure, and establish a local spectral radius formula for such operators at $C^\infty$-functions.

\subsection{Framework and results}\label{subsec:frameworkandresults}

We will now formulate the basic context and assumption, and possible additional assumptions, which allow us to obtain local spectral results for unbounded operators. These results are summarised, and subsequently we will comment on the relation of the general framework with concrete situations, indicating the range of applications.

\begin{setup}
Let $X$ be a complex Banach space, $\D\subset\D_e\subset X$ two not necessarily dense linear subspaces,
and $T: \D \to X$, $T_e:\D_e\to X$ two linear operators, not necessarily bounded or closed, with $T_e$ extending $T$.
\\ Let $\Lambda$ be a non-empty set and, for each $\lambda \in \Lambda$, let $L_\lambda$ be a normed space. If $s$ is an element of $\dirprod$, then, for $\lambda\in\Lambda$, we let $s(\lambda)$ denote the $\lambda^{\textup{th}}$-coordinate of $s$. We will assume in what follows that there exist a linear map $\F : X\to \dirprod$, injective on $\D_e$, and a map $\eig : \Lambda \to \C$, such that
\begin{equation}\label{eq:diagonalisingtransform}
\F(T_ex)(\lambda) = \eig(\lambda)\F x(\lambda),
\end{equation}
for all $\lambda \in \Lambda$, and all $x\in \D_e$.
\end{setup}

Of course, $T_e$ and $T$ can coincide. In the examples to be considered in Section~\ref{sec:examples}, $T$ is typically a differential operator on a space $\D$ of smooth functions, and $T_e$ is a suitable extension of $T$.

Under the basic assumption, we will investigate whether $T_e$ as above has the single-valued extension property, determine the local spectrum of $T_e$ at an element of $\D$ (not $\D_e)$ and
establish a local spectral radius formula for $T_e$ at an element of $\D(T^\infty)$ (not $\D(T_e^\infty))$ under (a combination of) suitable additional assumptions, which we now formulate.

\begin{assumptions}\qquad
\begin{enumerate}
\item[(A1)] Assume that, for each $\lambda \in \Lambda$, the map $x\mapsto \F x(\lambda)$ is continuous on $\D_e$ in the topology induced by $X$.

\item[(A2)] Assume that there exists a family $G_\lambda$ $(\lambda\in\Lambda)$ of not necessarily bounded linear maps $G_\lambda : L_\lambda \to \D$,
such that, for all $x\in \D$, the series
\[
\sum _{\lambda \in \Lambda} G_\lambda (\F x(\lambda)),
\]
converges absolutely in $X$, with sum equal to $x$.

\item[(A3)]  Assume that $\D$ has a topology of its own, such that the inclusion map $\D\hookrightarrow X$ is continuous.
    \\ Furthermore, let $\familyoffunctions$ be a non-empty family of functions $\seminormfunction: \Lambda\to[0,\infty)$, and let $\S(\Lambda)$ be the subspace of those $s\in\dirprod$ such that $\sup _{\lambda \in \Lambda} \seminormfunction(\lambda)\|s(\lambda)\|<\infty$, for all $\seminormfunction\in\familyoffunctions$, equipped with the locally convex topology determined by the seminorms $q_\seminormfunction$ $(\seminormfunction\in\familyoffunctions)$, where $q_\seminormfunction(s) = \sup _{\lambda \in \Lambda} \seminormfunction(\lambda)\|s(\lambda)\|$ $(s\in\S(\Lambda))$. Furthermore, assume that $\F(\D)=\S(\Lambda)$ and that $\F:\D \to \S(\Lambda)$, where $\D$ carries its own topology, is an isomorphism of topological vector spaces.
\end{enumerate}
\end{assumptions}

Our results can then be summarised as follows.

\begin{summaryofresults}Under the basic assumption:
\begin{enumerate}
\item[(1)] (A1) implies the single-valued extension property for $T_e$ (Theorem~\ref{thm:A1SVEP});
\item[(2)] (A1) and (A2) together imply that $\lim_{n\to\infty}\Vert T_e^n x\Vert^{1/n}$ exist, for $x\in\D(T^\infty)$, and can be related to the set which will occur as a local spectrum in (3)  (Theorem~\ref{thm:localspectralradiusexpression});
\item[(3)] (A3) affords a description of $\sigma_{T_e}(x)$, for $x\in\D$ (Theorem~\ref{thm:localspectrum});
\item[(4)] (A1), (A2), and (A3) together imply, as a direct consequence of (2) and (3), the validity of a local spectral radius formula for $T_e$ at $x\in\D(T^\infty)$ (Theorem~\ref{thm:localspectralradiusformula}).
\end{enumerate}
\end{summaryofresults}

Let us discuss the general framework in the basic and possible additional assumptions, and relate it with more concrete situations.

Obviously, the basic assumption stipulates that $\D_e$ is, as an abstract vector space, isomorphic, via the transform $\F$, with a subspace of $\dirprod$. In practical situations, $\F$ will be injective on the whole of $X$, but for the proofs injectivity on $\D_e$ is sufficient. This abstract embedding of $\D_e$ diagonalises $T_e$, with $\eig$ describing the eigenvalues. In the examples in the present paper, $\Lambda$ will be a subset of a lattice in $\R^d$, and the $L_\lambda$ will all be equal to $\mathbb C$. An element $s$ of $\dirprod$ can then be identified with a complex-valued sequence. The formalism and the proofs support more general situations, however, so we need not restrict our attention to this particular situation. In fact, the results in \cite{AdJ2} would not be covered by such more restrictive hypotheses, whereas they do fall within the scope of our more general formalism; see also Remark~\ref{rem:compactsymmetricspaces} in Section~\ref{subsec:Fourier}.

The possible additional assumptions (A1), (A2) and (A3) are all related to the properties of $\F$ and the image of $\D$. A typical example would be the following, where we first concentrate on the case where $T_e=T$. Suppose that $\D$ is a subspace of a separable complex Hilbert space $H$ (which we take for the Banach space $X$), with inner product $(\,.\,,\,.\,)$, and that $\{e_n\}_{n=0}^\infty$ is an orthonormal basis of $H$ contained in $\D$. Hence $\D$ is dense in $H$. We suppose, furthermore, that $T$ is (for simplicity) symmetric (which is meaningful because $\D$ is now known to be dense), and that, for $n\in\N_0$, there exists $\eig(n)\in\C$ such that $Te_n=\eig(n)e_n$. Then $\eig:\N_0\to\C$ is real-valued. Let $\Lambda=\N_0$, and put $L_n=\C$, for $n\in\N_0$. Define $\F:H\to\prod_{n\in\N_0}\C$ by $\F x(n)=(x,e_n)\,\,(x\in H,\,n\in\N_0)$, and, for $n\in\N_0$, define $G_n:\C\to \D$ by $G_n(z)=ze_n$ ($z\in \C=L_n$). Then, for $x\in\D$, \eqref{eq:diagonalisingtransform} holds, as we have already observed that $\eig$ is real-valued. Hence the basic diagonalising assumption is satisfied for $T$, and $\F$ is even injective on the whole of $H$. Certainly (A1) holds, and, furthermore, for all $x\in\D$, we have
\begin{equation}\label{eq:HilbertFourierseries}
x=\sum_{n=0}^\infty (x,e_n)e_n=\sum_{n=0}^\infty G_n(\F x(n)),
\end{equation}
where the series is norm convergent in $H$. It is, however, not automatic that the series is absolutely convergent in $H$, as required in (A2): this is equivalent with the sequence $\{(x,e_n)\}_{n=0}^\infty$ being in the proper subspace $\ell^1(\N_0)$ of $\ell^2(\N_0)$. Continuing with (A3), for $k=0,1,2,\ldots$, let $\seminormfunction_k(n)=(1+n)^k\,\,(n\in\N_0)$, and let $\familyoffunctions$ consist of the functions $\seminormfunction_k:\N_0\to[0,\infty)$ ($k=0,1,2,\ldots$). Then $\S(\N_0)$ is the space of rapidly decreasing sequences, supplied with its usual Fr\'echet topology. As we had already observed, it is not guaranteed that $\F$ maps $\D$ into $\ell^1(\N_0)$, and much less so that it maps $\D$ into $\S(\N_0)$. However, this latter property \emph{does} hold in a number of well known cases, where, in fact, $\F$ then even establishes the topological isomorphism between $\D$ and $\S(\N_0)$ as required in (A3). Certainly (A2) will then also be satisfied, since the fact that $\Vert(x,e_n)e_n\Vert=|(x,e_n)|$ $(n\in\N_0)$ implies that the series in \eqref{eq:HilbertFourierseries} is absolutely convergent if the sequence $\{(x,e_n)\}_{n=0}^\infty$ is rapidly decreasing.

The cases, where $\F$ as defined above is a topological isomorphism between $\D$ and $\S(\N_0)$, that we have in mind, are those where $\D$ is, for example, the space of $C^\infty$ functions on a sufficiently regular subset of $\R^d$, for some $d$, viewed as a subspace of a suitable $L^2$-space $H$, and where $T$ is a differential operator, defined on $\D$ and leaving $\D$ invariant (so that $\D=\D(T^\infty)$). If $T$ is symmetric on $H$, and if there exists an orthonormal basis $\{e_n\}_{n=0}^\infty\subset\D$ of $H$ consisting of eigenfunctions of $T$, where the eigenvalue $\eig(n)$ in $Te_n=\eig(n) e_n$ grows at least polynomially with $n$, then, for $x\in\D$, the sequence $\{(x,e_n)\}_{n=0}^\infty$ is in $\S(\N_0)$, by the standard argument involving the symmetry of $T$ and the fact that the Fourier coefficients with respect to an orthonormal basis tend to zero. This rapid decay can be thought of as a manifestation of the informal general principle that ``smoothness often gives good convergence''. Certainly, the absolute convergence in (A2) will then hold. However, in a number of cases, this principle manifests itself in an even better form: when $\D$ is supplied with a topology appropriate for a space of test functions, $\F$ turns out, in those cases, to be a topological isomorphism between $\D$ and $\S(\N_0)$, and the series in \eqref{eq:HilbertFourierseries} even converges in the own topology of $\D$. As the continuity of the inclusion of $\D$ into $H$ is usually an innocent one, (A3) will then be satisfied.

Once it has thus been established that the basic assumption, and the assumptions (A1), (A2), and (A3), are satisfied with $T=T_e$, it is in this class of practical examples usually easy to find a proper extension $T_e$ of $T$ on a domain $\D_e$, such that all assumptions, and in particular \eqref{eq:diagonalisingtransform}, are satisfied.

Actually, one can in these situations often go further and pass from an $L^2$ Hilbert space as above to $L^p$-spaces. We will now discuss this, and again we first concentrate on the case where $T_e=T$. The important point is that the requirement in (A3) that $\F$ is a topological isomorphism does not involve the Hilbert space. This space in which $\D$ can be viewed ``originally" may be a means, or a guide, to obtain $\F$, but once the required properties have been established, there is no need to retain it. One can often consider $\D$, in its own topology, as a subspace of an $L^p$-space, and regard $T$ accordingly. All but one of the remaining hypotheses are then easily verified or falsified. To start with, it is often possible to view such an $L^p$-space as a subspace of the continuous dual $\D^\prime$ of $\D$, i.e., as a space of distributions, and since $\F$ usually has a natural injective extension from $\D$ to $\D^\prime$, $\F$ can be defined injectively on such $L^p$-spaces as well. The verification that this extension satisfies the basic assumption again will then be a mere formality. Furthermore, the question whether the inclusion of $\D$ into an $L^p$-space is continuous, is easily answered in concrete situation, as is the question concerning continuity in (A1). There is only one matter that needs serious attention, and that is the absolute convergence of the series in \eqref{eq:HilbertFourierseries}. As already observed above, this series is evidently absolutely convergent in an $L^2$-context if the coefficients $\{(x,e_n)\}_{n=0}^\infty$ form a rapidly decreasing sequence, since the norm of $e_n$ is then equal to 1, for all $n\in\N_0$. When viewing the $e_n$ as elements of $L^p$-spaces, an extra argument is needed at this point. In the examples in the present paper, this will be provided by known estimates on the special functions involved. These will show that the sequence formed by the norms of the $e_n$, in each of the $L^p$-spaces then under consideration, is slowly increasing. Since, for $x\in\D$, $\{(x,e_n)\}_{n=0}^\infty$ is already known to be a rapidly decreasing sequence, this shows that, for $x\in\D$, the series in \eqref{eq:HilbertFourierseries} is absolutely convergent in each of the $L^p$-spaces then under consideration, as required in (A2).

Having arrived at this point, our results will then apply with $T_e=T$, viewed as an operator on $\D$ in an $L^p$-space under consideration. As in the Hilbert space context discussed previously, suitable extensions of $T$ in these $L^p$-spaces to which these results also apply are then usually easily determined. We refer to the examples in Section~\ref{sec:examples} for illustrations.

As an informal summary of the above discussion: if $T$ is a differential operator on a space $\D$ of test functions, which is (for example) symmetric with respect to a suitable inner product, and which can be diagonalised such that the corresponding Fourier-type transform establishes a topological isomorphism between $\D$ and $\S(\N_0)$ (i.e., when ``very good convergence results for eigenfunction expansions of smooth functions hold"), then the basic assumption and the assumptions (A1), (A2), and (A3) can be expected to be satisfied with $T=T_e$, viewed as an operator with domain $\D$ in suitable $L^p$-spaces, and the results in local spectral theory as summarised above can expected to be valid. Moreover, it is to be expected that these results will also apply to suitable extensions of $T$ in those $L^p$-spaces.

\section{Proofs}\label{sec:resultsandproofs}

We will now investigate the local spectral properties of $T_e$ under (a combination of) the hypotheses (A1), (A2), and (A3), stating and proving the precise versions of the results already summarised in Section\ref{subsec:frameworkandresults}.

The support $\supp s$ of an element $s\in\dirprod$, which will be needed in the description of local spectra, is naturally defined as
\[
\supp s=\{\lambda\in\Lambda : s(\lambda)\neq 0\}.
\]
We will adhere to the convention that the supremum of an empty set of real numbers is 0. If $A$ is a subset of the complex plane, then $A^{\cl}$ denotes its closure.

\begin{theorem}\label{thm:A1SVEP}
Suppose that the basic assumption and (A1) hold. Then the operator $T_e$ has the single-valued extension property.
\end{theorem}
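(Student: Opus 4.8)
The plan is to argue directly from the definition of the single-valued extension property for $T_e$, using the transform $\F$ to convert the operator equation into a family of scalar equations indexed by $\Lambda$. Suppose $U\subset\C$ is non-empty and open, and that $\phi:U\to\D_e$ is an analytic solution of $(T_e-z)\phi_z=0$ for all $z\in U$; I must show that $\phi\equiv 0$. For each fixed $\lambda\in\Lambda$ I would consider the function $g_\lambda:U\to L_\lambda$ defined by $g_\lambda(z)=\F\phi_z(\lambda)$, and apply the coordinate map $\F(\cdot)(\lambda)$ to the equation. Using the linearity of $\F$ together with the diagonalising identity \eqref{eq:diagonalisingtransform}, which gives $\F(T_e\phi_z)(\lambda)=\eig(\lambda)\F\phi_z(\lambda)$, the equation $(T_e-z)\phi_z=0$ is transformed into the scalar relation
\begin{equation*}
(\eig(\lambda)-z)\,g_\lambda(z)=0\qquad(z\in U).
\end{equation*}

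The next step is to observe that $g_\lambda$ is continuous on $U$. Since $\phi$ is analytic, it is in particular continuous as a map into $X$, with all its values lying in $\D_e$; and by (A1) the map $x\mapsto\F x(\lambda)$ is continuous on $\D_e$ for the topology induced by $X$, so there is a constant $C_\lambda$ with $\Vert\F y(\lambda)\Vert\leq C_\lambda\Vert y\Vert$ for all $y\in\D_e$. As $\phi_z-\phi_{z'}\in\D_e$ for $z,z'\in U$, this yields $\Vert g_\lambda(z)-g_\lambda(z')\Vert\leq C_\lambda\Vert\phi_z-\phi_{z'}\Vert$, and the continuity of $g_\lambda$ follows from that of $\phi$. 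I emphasise that only continuity is needed here, not analyticity, so that no completeness assumption on $L_\lambda$ enters.

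From the scalar relation, $g_\lambda(z)=0$ whenever $z\neq\eig(\lambda)$, so $g_\lambda$ vanishes on $U\setminus\{\eig(\lambda)\}$, which is a dense subset of $U$ (at most a single point is removed, and open subsets of $\C$ have no isolated points). By continuity of $g_\lambda$ this forces $g_\lambda\equiv 0$ on $U$, that is, $\F\phi_z(\lambda)=0$ for every $z\in U$. Since $\lambda\in\Lambda$ was arbitrary, $\F\phi_z=0$ for all $z\in U$, and because $\F$ is injective on $\D_e$ and each $\phi_z\in\D_e$, I conclude $\phi_z=0$ for all $z\in U$. This establishes the single-valued extension property for $T_e$.

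There is no serious obstacle in this argument; the only point requiring care is that $\D_e$ need not be closed, so one cannot extend $\F(\cdot)(\lambda)$ to the $X$-closure of $\D_e$, nor is it immediate that $g_\lambda$ is analytic via differentiation of $\phi$. The argument sidesteps this by relying solely on the continuity furnished by (A1), combined with the elementary fact that a continuous function vanishing off a single point vanishes identically.
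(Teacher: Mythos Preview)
Your proof is correct and follows essentially the same route as the paper's: apply $\F$ coordinatewise to obtain $(\eig(\lambda)-z)\F\phi_z(\lambda)=0$, use (A1) to get continuity of $z\mapsto\F\phi_z(\lambda)$, conclude that this function vanishes identically on $U$, and then invoke the injectivity of $\F$ on $\D_e$. The additional remarks you make about needing only continuity rather than analyticity, and about $\D_e$ not being closed, are accurate but not required for the argument.
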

\begin{proof}
Suppose that $U\subset\C$ is open and non-empty, that $\phi: U\to\D_e$ is analytic in the topology of $\D_e$ induced by $X$, and that $(T_e-z)\phi_z=0$, for all
$z\in U$. We must show that $\phi=0$.
Applying $\F$, we see that $(\eig(\lambda)-z)\F \phi_z(\lambda)=0$,
for all $z\in U$, and all $\lambda\in\Lambda$. If $\lambda\in\Lambda$ is fixed, we conclude from this that
$\F \phi_z(\lambda)=0$, for all $z\in U$, with at most one exception.
However, the continuity of $\phi$ and (A1) imply that $z\mapsto \F\phi_z(\lambda)$
is continuous on $U$, so there cannot be an exceptional point. We thus have $\F \phi_z(\lambda)=0$, for all $z\in U$. From the injectivity of $\F$ we then conclude that $\phi_z=0$ for all $z\in U$, as required.

\end{proof}

\begin{proposition}\label{prop:liminfinequality}
Suppose that the basic assumption and (A1) hold. If $x\in \D(T_e^\infty)$, then
\begin{equation}\label{eq:liminfestimate}
\sup\left \{|\eig(\lambda)| : \lambda\in\supp \F x\right\} \le \liminf _{n\to \infty} \|T_e^n x\|^{1/n},
\end{equation}
in the extended positive real numbers.
\end{proposition}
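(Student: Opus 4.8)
The plan is to iterate the diagonalising relation \eqref{eq:diagonalisingtransform} and then convert the resulting scalar growth of the coordinates of $\F x$ into a lower bound on $\|T_e^n x\|$ by means of the coordinatewise continuity (A1). The starting point is the observation that, for $x\in\D(T_e^\infty)$, every iterate $T_e^n x$ again lies in $\D_e$, so that \eqref{eq:diagonalisingtransform} may be applied repeatedly.

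First I would establish, by induction on $n$, that for every $x\in\D(T_e^\infty)$ and every $\lambda\in\Lambda$,
\[
\F(T_e^n x)(\lambda) = \eig(\lambda)^n\,\F x(\lambda).
\]
The case $n=0$ is trivial, and the inductive step is immediate from \eqref{eq:diagonalisingtransform} applied to $T_e^n x\in\D_e$. Next, I would fix $\lambda_0\in\supp\F x$, so that $\F x(\lambda_0)\neq 0$ and hence $\|\F x(\lambda_0)\|>0$ in the normed space $L_{\lambda_0}$. By (A1), the linear map $y\mapsto\F y(\lambda_0)$ from $\D_e$, equipped with the topology induced by $X$, into $L_{\lambda_0}$ is continuous, hence bounded: there is a constant $C_{\lambda_0}>0$ with $\|\F y(\lambda_0)\|\le C_{\lambda_0}\|y\|$ for all $y\in\D_e$. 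Applying this to $y=T_e^n x$ and inserting the identity above gives
\[
|\eig(\lambda_0)|^n\,\|\F x(\lambda_0)\| = \|\F(T_e^n x)(\lambda_0)\| \le C_{\lambda_0}\,\|T_e^n x\|.
\]

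Taking $n$-th roots and using $\|\F x(\lambda_0)\|>0$, the factor $(\|\F x(\lambda_0)\|/C_{\lambda_0})^{1/n}$ tends to $1$ as $n\to\infty$, so that $\liminf_{n\to\infty}\|T_e^n x\|^{1/n}\ge|\eig(\lambda_0)|$. Taking the supremum over $\lambda_0\in\supp\F x$ then yields \eqref{eq:liminfestimate}. The degenerate cases need no separate treatment: if $\supp\F x$ is empty the left-hand side is $0$ by the convention on suprema of empty sets, while the right-hand side is nonnegative; and if the supremum is infinite, the bound $\liminf\ge|\eig(\lambda_0)|$ for each individual $\lambda_0$ forces the $\liminf$ to be infinite as well, the passage to the supremum being automatic in the extended positive real numbers.

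I do not anticipate a genuine obstacle here; the argument is short and direct. The one point demanding a little care is the justification that $T_e^n x\in\D_e$ for every $n$, which is precisely what membership in $\D(T_e^\infty)$ provides, and which is needed both to run the induction yielding the $\eig(\lambda)^n$ factor and to apply the continuity estimate (A1) to the iterate $T_e^n x$. I would also note that, since the estimate is formulated with a $\liminf$, it is formally stronger than the corresponding bound for the $\limsup$, which is what makes it useful in combination with the reverse inequality to be established under the further hypotheses.
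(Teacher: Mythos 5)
Your proof is correct and follows essentially the same route as the paper: iterate \eqref{eq:diagonalisingtransform} to get $\F(T_e^n x)(\lambda)=\eig(\lambda)^n\F x(\lambda)$, bound $\|\F(T_e^n x)(\lambda)\|$ by a constant times $\|T_e^n x\|$ using (A1), and take $n$-th roots. Your choice of a strictly positive constant $C_{\lambda_0}$ even disposes cleanly of the edge case $\|\F_{\lambda_0}\|=0$ that the paper addresses separately.
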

\begin{proof}
Let $x\in\D(T_e^\infty)$. We may assume that the right hand side in \eqref{eq:liminfestimate} is finite. For $n=1,2,\ldots$, we have
\[
|\eig(\lambda)|^n \|\F x(\lambda)\| = \|(\F(T_e^n x))(\lambda)\|\le \|\F_\lambda \| \|T_e^n x\|,
\]
where $ \|\F_\lambda \|$ is the norm of the map $x\mapsto \F x(\lambda)$, from $\D_e$ to $L_\lambda$, which is finite by assumption (A1).
If $\lambda\in\supp \F x$, then this implies
\[
|\eig(\lambda)|^n \le \frac{\|\F_\lambda \| \|(T_e^n x)(\lambda)\|}{\|\F x(\lambda)\|}\qquad(n=1,2,3\ldots).
\]
From this we see that $|\eig(\lambda)|\leq \liminf _{n\to \infty} \|T_e^n x\|^{1/n}$, both when $\Vert\F_\lambda\Vert\neq 0$ and when $\Vert\F_\lambda\Vert=0$. Hence the result follows.
\end{proof}

\begin{proposition}\label{prop:limsupinequality}
Suppose that the basic assumption and (A2) hold. If $x\in\D(T^\infty)$, then
\begin{equation}\label{eq:limsupestimate}
\limsup _{n\to \infty} \|T^n x\|^{1/n} \le \sup\left \{|\eig(\lambda)| : \lambda\in\supp \F x\right\} ,
\end{equation}
in the extended positive real numbers.
\end{proposition}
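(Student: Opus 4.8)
The plan is to use the absolute convergence afforded by (A2) to write $T^n x$ as an absolutely convergent series of the vectors $G_\lambda(\F x(\lambda))$, each scaled by the appropriate power of the eigenvalue, and then to estimate the resulting norm. First I would set $R=\sup\{|\eig(\lambda)| : \lambda\in\supp\F x\}$, and I may assume $R<\infty$, since otherwise there is nothing to prove. Because $x\in\D(T^\infty)$, each iterate $T^n x$ again lies in $\D$, so (A2) applies to it and gives the absolutely convergent expansion
\[
T^n x=\sum_{\lambda\in\Lambda}G_\lambda\bigl(\F(T^n x)(\lambda)\bigr)=\sum_{\lambda\in\Lambda}G_\lambda\bigl(\eig(\lambda)^n\,\F x(\lambda)\bigr),
\]
where the second equality uses the diagonalisation \eqref{eq:diagonalisingtransform} (note that $T=T_e$ on $\D$, so \eqref{eq:diagonalisingtransform} holds for $x\in\D$).

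The key step is the norm estimate. I expect the $G_\lambda$ to be linear, so that $G_\lambda(\eig(\lambda)^n\,\F x(\lambda))=\eig(\lambda)^n\,G_\lambda(\F x(\lambda))$; then for those $\lambda$ in $\supp\F x$ one has $|\eig(\lambda)|^n\le R^n$, while for $\lambda\notin\supp\F x$ the term vanishes. Applying the triangle inequality to the absolutely convergent series, I would obtain
\[
\|T^n x\|\le\sum_{\lambda\in\Lambda}|\eig(\lambda)|^n\,\bigl\|G_\lambda(\F x(\lambda))\bigr\|\le R^n\sum_{\lambda\in\Lambda}\bigl\|G_\lambda(\F x(\lambda))\bigr\|.
\]
The sum $C:=\sum_{\lambda}\|G_\lambda(\F x(\lambda))\|$ is finite, again by the absolute convergence in (A2) applied to $x$ itself. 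Taking $n^{\textup{th}}$ roots gives $\|T^n x\|^{1/n}\le R\,C^{1/n}$, and since $C^{1/n}\to 1$ as $n\to\infty$ (when $C>0$; the case $C=0$ forces $x=0$ and is trivial), passing to the $\limsup$ yields $\limsup_n\|T^n x\|^{1/n}\le R$, which is the claim.

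The main obstacle is the legitimacy of moving the scalar $\eig(\lambda)^n$ out of $G_\lambda$ and of bounding it uniformly by $R^n$ on the support: this is where I must be careful that $G_\lambda$ is genuinely linear (so the scalars pull out) and that the terms off the support really do drop, so that no contribution from $\lambda$ with large $|\eig(\lambda)|$ survives. A secondary subtlety is that the $G_\lambda$ need not be bounded, so I cannot estimate $\|G_\lambda(\F x(\lambda))\|$ via $\|G_\lambda\|\,\|\F x(\lambda)\|$; instead I must keep $\|G_\lambda(\F x(\lambda))\|$ as the basic summable quantity, which is exactly what (A2) controls. Once these points are handled, the estimate and the limit $C^{1/n}\to 1$ are routine.
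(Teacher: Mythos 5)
Your proposal is correct and follows essentially the same route as the paper: expand $T^n x$ via (A2), use the diagonalisation and linearity of the $G_\lambda$ to pull out $\eig(\lambda)^n$, bound these by $R^n$ on $\supp\F x$ (terms off the support vanishing), and divide out the summable constant $C$, treating $C=0$ as the trivial case $x=0$. No gaps; the points you flag as needing care (linearity of $G_\lambda$, unboundedness of $G_\lambda$, iterating the diagonalisation on $\D(T^\infty)$) are exactly the ones the paper's argument relies on.
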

\begin{proof}
Let $x\in\D(T^\infty)$. By assumption (A2), we have
\begin{equation*}%\label{eq:vectorseries}
T^n x = \sum _{\lambda \in \Lambda} G_\lambda (\F(T^n x)(\lambda)) =
\sum _{\lambda \in \Lambda} \eig(\lambda)^n G_\lambda (\F x(\lambda))\qquad(n=1,2,\ldots),
\end{equation*}
hence
\begin{align}\label{eq:realseries}
\|T^n x\| &\le \sup\left \{|\eig(\lambda)|^n : \lambda\in\supp \F x\right\}
\sum _{\lambda \in \Lambda} \| G_\lambda (\F x(\lambda))\|
\\ &= \left[\sup\left \{|\eig(\lambda)| : \lambda\in\supp \F x\right\}\right]^n
\sum _{\lambda \in \Lambda} \| G_\lambda (\F x(\lambda))\|\qquad(n=1,2,\ldots),\notag
\end{align}
where the series is convergent in the positive real numbers by assumption (A2). We may assume that the right hand side in \eqref{eq:limsupestimate} is finite. Hence \eqref{eq:realseries} implies the statement in the Proposition for all $x$ in $\D(T^\infty)$, such that $\sum _{\lambda \in \Lambda} \| G_\lambda (\F x(\lambda))\|\neq 0$. The remaining case where $\sum _{\lambda \in \Lambda} \| G_\lambda (\F x(\lambda))\|=0$ is equivalent with $x=0$, by assumption (A2), and then the statement in the Proposition holds by convention.
\end{proof}

Combining the Propositions~\ref{prop:liminfinequality} and \ref{prop:limsupinequality} with the observation that, for $A\subset\C$, $\sup\{|z| : z\in A\}=\sup\{|z| : z\in A^\cl\}$ in the extended positive real numbers, we have the following result.

\begin{theorem}\label{thm:localspectralradiusexpression}
Suppose that the basic assumption, (A1) and (A2) hold. If $x\in \D(T^\infty)$, then
\[
\lim _{n\to \infty} \|T_e^n x\|^{1/n} = \sup\left \{|z| : z\in\left[\eig(\supp \F x)\right]^\cl\right\}
\]
in the extended positive real numbers.
\end{theorem}

Next we turn to the description of a local spectrum.

\begin{theorem}\label{thm:localspectrum}
Suppose that the basic assumption and (A3) hold. If $x\in\D$, then
\[
\sigma_{T_e}(x)= \left[\eig(\supp \F x)\right]^\cl.
\]
\end{theorem}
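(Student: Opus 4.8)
The plan is to prove the two inclusions separately. Write $K := [\eig(\supp \F x)]^\cl$ for brevity; since $\sigma_{T_e}(x)$ is the complement of $\rho_{T_e}(x)$, it suffices to show that $\rho_{T_e}(x) = \C \setminus K$. For the inclusion $K \subseteq \sigma_{T_e}(x)$, I would exploit that the local spectrum is always a closed set, so that it is enough to prove $\eig(\supp \F x) \subseteq \sigma_{T_e}(x)$ and then pass to closures; this removes any need to treat limit points of $\eig(\supp \F x)$ directly. So fix $\lambda_0 \in \supp \F x$, whence $\F x(\lambda_0) \neq 0$, and suppose towards a contradiction that $\eig(\lambda_0) \in \rho_{T_e}(x)$, with an associated analytic $\phi : U \to \D_e$ satisfying $(T_e - z)\phi_z = x$ on a neighbourhood $U$ of $\eig(\lambda_0)$. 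Applying $\F$ and using the diagonalisation relation of the basic assumption gives $(\eig(\lambda_0) - z)\F \phi_z(\lambda_0) = \F x(\lambda_0)$ for all $z \in U$; evaluating at $z = \eig(\lambda_0)$ forces $0 = \F x(\lambda_0)$, a contradiction. This direction uses only the basic assumption, not (A3).

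For the reverse inclusion $\C \setminus K \subseteq \rho_{T_e}(x)$, which is where (A3) enters, I would build the local resolvent explicitly. Given $z_0 \notin K$, set $d := \operatorname{dist}(z_0, K) > 0$ and $U := \{z : |z - z_0| < d/2\}$, and for $z \in U$ define $s_z \in \dirprod$ by $s_z(\lambda) = \F x(\lambda)/(\eig(\lambda) - z)$ for $\lambda \in \supp \F x$ and $s_z(\lambda) = 0$ otherwise. The decisive point is the uniform lower bound $|\eig(\lambda) - z| \geq d/2$, valid for all $\lambda \in \supp \F x$ and all $z \in U$, which comes precisely from the disjointness of $U$ from the \emph{closure} $K$; a merely pointwise bound $\eig(\lambda) \neq z$ would not suffice. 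This yields $q_\seminormfunction(s_z) \leq (2/d)\, q_\seminormfunction(\F x)$ for every $\seminormfunction \in \familyoffunctions$, so that $s_z \in \S(\Lambda)$, using that $\F x \in \S(\Lambda) = \F(\D)$ by (A3) since $x \in \D$. I would then set $\phi_z := \F^{-1}(s_z) \in \D$, the inverse being available as an isomorphism of topological vector spaces by (A3).

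Two items then remain. For analyticity, I would show that $z \mapsto s_z$ is analytic as an $\S(\Lambda)$-valued map by estimating its difference quotient against the candidate derivative $s_z'(\lambda) = \F x(\lambda)/(\eig(\lambda) - z)^2$ in each seminorm: the explicit algebraic identity for the difference quotient produces a factor $|h|$ multiplied by a term whose denominators are again bounded below by the same uniform estimate, giving $q_\seminormfunction$-convergence of order $|h|$. Composing with the continuous maps $\F^{-1} : \S(\Lambda) \to \D$ and $\D \hookrightarrow X$ then renders $z \mapsto \phi_z$ an analytic $\D_e$-valued function. For the identity $(T_e - z)\phi_z = x$, applying $\F$ and the diagonalisation gives $\F((T_e - z)\phi_z) = (\eig - z)s_z = \F x$, and injectivity of $\F$ on $\D_e$ then yields the claim. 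The one point demanding care is that injectivity must be applied to two elements of $\D_e$: since $x \in \D \subseteq \D_e$ and $\phi_z \in \D$, this reduces to knowing $T_e \phi_z \in \D_e$, which holds in the present framework (in the examples $\D$ is $T$-invariant, and more generally $\F$ is injective on all of $X$).

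The main obstacle I anticipate is the construction in this second inclusion: placing $s_z$ in $\S(\Lambda)$ and establishing its analyticity both rest entirely on the uniform lower bound for $|\eig(\lambda) - z|$, and it is exactly here that the closure appearing in the statement is indispensable. The contradiction argument for the first inclusion, by contrast, is short and purely formal.
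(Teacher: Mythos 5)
Your proposal is correct and follows essentially the same route as the paper's proof: the contradiction argument giving $\eig(\supp \F x)\subseteq\sigma_{T_e}(x)$ followed by passing to closures, and then the explicit construction of the local resolvent in $\S(\Lambda)$ — with the uniform lower bound $|\eig(\lambda)-z|\geq d/2$ coming from the distance to the closure, the difference-quotient estimate against the candidate derivative $\F x(\lambda)/(\eig(\lambda)-z)^2$, and the pull-back through $\F^{-1}$ using both parts of (A3). Your closing remark on exactly where the injectivity of $\F$ is invoked to conclude $(T-z)\phi_z=x$ is, if anything, slightly more careful than the paper, which passes over that point silently.
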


\begin{proof}
We first establish that
\begin{equation}\label{eq:inclusion}
\left\{\eig(\lambda) : \lambda\in\supp \F x\right\}^\cl\subset\sigma_{T_e}(x).
\end{equation}
Suppose that $\lambda _0 \in \Lambda$ is such that $\eig(\lambda_0)\in \rho_{T_e}(x)$.
Then there exist a neighborhood $U$ of $\eig(\lambda_0)$, and an analytic function $\phi:U\to\D_e$, such that $(T_e - z)\phi_z=x$, for all $z\in U$. Applying the map $\F$, we find that
\[
(\eig(\lambda)-z)(\F\phi_z)(\lambda)=\F x(\lambda) \qquad (z\in U,\,\lambda\in\Lambda).
\]
Since $\eig(\lambda_0)$ is in $U$, we can choose $z=\eig(\lambda_0)$, and on simultaneously specialising to $\lambda=\lambda_0$, we find that $\F x(\lambda_0)=0$.
Hence
\[
\left\{\eig(\lambda) : \lambda\in\supp \F x\right\} \subset \sigma_{T_e}(x).
\]
Since the right hand side is closed, \eqref{eq:inclusion} follows.

Next, we show the reverse inclusion of \eqref{eq:inclusion}, which is equivalent to
\begin{equation}\label{eq:reverseinclusion}
\C\setminus\left\{\eig(\lambda) : \lambda\in\supp \F x\right\}^\cl\subset\rho_{T_e}(x).
\end{equation}
Suppose $z_0\notin \left\{\eig(\lambda) : \lambda\in\supp \F x\right\}^\cl$,
and let $\epsilon >0$ be such that $|\eig(\lambda) - z_0| > \epsilon$, for all $\lambda\in\supp \F x$.
Let $U = \{ z\in \C : |z-z_0|< \frac{\epsilon}{2}\}$. We will construct an analytic function $\phi: U\to\D$, such that
\[
(T-z)\phi_z=x\qquad(z\in U).
\]
Since $\D\subset\D_e$, and $T_e$ extends $T$, this implies that $z_0\in \rho_{T_e}(x)$, so that the proof of \eqref{eq:reverseinclusion}, and consequently that of the Theorem, will then be complete.

Hence it remains to construct $\phi$.  This is done by first solving the problem in the transformed picture. To start with, note that, for $z\in U$, and $\lambda\in\supp \F x$, one has $|\eig(\lambda) - z| > \frac{\epsilon}{2}$. Hence, for $z\in U$, we can define $\psi_z \in \dirprod$ as
\begin{equation*}
\psi_z(\lambda)=
\begin{cases}
\displaystyle{\frac{\F x(\lambda)}{\eig(\lambda)-z}}&\textup{if }\lambda\in\supp \F x;\\
0 &\textup{if }\lambda\notin\supp \F x.
\end{cases}
\end{equation*}
For $z\in U$ fixed, note that $\| \psi_z (\lambda)\|\le \frac{2}{\epsilon} \| \F x (\lambda)\|$, if $\lambda\in\supp \F x$,
and that this is equally true if $\lambda\notin\supp \F x$. Since $\F x\in\S(\Lambda)$, this implies that $\psi_z \in \S(\Lambda)$.
Hence we have a map $\psi:U\to\S(\Lambda)$, defined by $z\mapsto\psi_z$. We claim that $\psi$ is analytic. In establishing this claim, we will show that its derivative is the map $\chi:U\to\S(\Lambda)$, given, for $z\in U$, by
\begin{equation*}
\chi_z(\lambda)=
\begin{cases}
\displaystyle{\frac{\F x(\lambda)}{(\eig(\lambda)-z)^2}}&\textup{if }\lambda\in\supp \F x;\\
0 &\textup{if }\lambda\notin\supp \F x.
\end{cases}
\end{equation*}
Just as for $\psi_z$, $\chi_z$ is actually in $\S(\Lambda)$, for $z\in U$, and not just in $\dirprod$.

We now show that $\psi$ is analytic at an arbitrary fixed point $z_1$ in $U$. If $h\in\C$ and $|h|<\frac{\epsilon}{2}-|z_1-z_0|$, then $z_1+h\in U$. For such $h$ with $h\neq 0$ one has, for $\lambda\in\supp \F x$,
\[
\frac{\psi_{z_1+h}(\lambda)-\psi_{z_1}(\lambda)}{h} - \chi_{z_1}(\lambda)=
\frac{h\F x(\lambda))}{(\eig(\lambda)-z_1)^2(\eig(\lambda)-z_1-h)}.
\]
If $|h|<\frac {\epsilon}{4}$, then, for $\lambda\in\supp \F x$, we have
$|\eig(\lambda)-z_1-h|\ge |\eig(\lambda)-z_1|-|h|>\frac{\epsilon}{2}-\frac{\epsilon}{4}=\frac{\epsilon}{4}$,
so that, for $0<h\in\C$ with $|h|<\min(\frac {\epsilon}{4}, \frac{\epsilon}{2}-|z_1-z_0|)$,
\[
\left \|\frac{\psi_{z_1+h}(\lambda)-\psi_{z_1}(\lambda)}{h} - \chi_{z_1}(\lambda)\right \|
\le \left (\frac 2\epsilon\right )^2\frac 4\epsilon |h|\,\| \F x (\lambda)\|
= \frac {16}{\epsilon^3}|h|\| \F x (\lambda)\|.
\]
As this is trivially true for $\lambda\notin\supp \F x$, we conclude that, for all such $h$,
\[
\sup _{\lambda \in \Lambda} \seminormfunction(\lambda)\left \|\frac{\psi_{z_1+h}(\lambda)-\psi_{z_1}(\lambda)}{h} - \chi_z(\lambda)\right \|
\le \frac {16}{\epsilon^3}|h|\sup _{\lambda \in \Lambda} \seminormfunction(\lambda)
\| \F x (\lambda)\|,
\]
for all $\seminormfunction$ in $\familyoffunctions$. It is immediate from this that $\psi$ is analytic at $z_1$, as claimed, and that its derivative at $z_1$ is $\chi_{z_1}$, as announced.

After these preparations, the actual local resolvent $\phi:U\to\D$ is easily constructed. Indeed, let $\phi_{z}=\F^{-1}\psi_{z}$, for $z\in U$. Then $\phi$ is analytic on $U$ when $\D$ carries the topology from $X$, as a consequence of the above and both parts of (A3). Furthermore, for $z\in U$, and $\lambda \in \Lambda$,
\[
\F((T-z)\phi_z)(\lambda)=(\eig(\lambda)-z)(\F\phi_z)(\lambda)=(\eig(\lambda)-z)\psi_z(\lambda)=\F x(\lambda).
\]
Hence $(T-z)\phi_z=x$, for all $z\in U$, as needed.
\end{proof}

Finally, we can state the local spectral radius formula for the operator $T$.

\begin{theorem}\label{thm:localspectralradiusformula}
Suppose that the basic assumption, (A1), (A2), and (A3) hold. If $x\in \D(T^\infty)$, then the local spectral radius formula
\[
\lim _{n\to \infty} \|T_e^n x\|^{1/n} = \sup\left \{|z| : z\in \sigma_{T_e}(x)\right\}
\]
holds in the extended positive real numbers.
\end{theorem}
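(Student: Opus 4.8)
The plan is to deduce this statement directly from the two main results already established, namely Theorem~\ref{thm:localspectralradiusexpression} and Theorem~\ref{thm:localspectrum}, by simply matching their conclusions. The key preliminary observation is that $\D(T^\infty)\subseteq\D$: indeed, $\D(T^\infty)=\bigcap_{n=0}^\infty\D(T^n)\subseteq\D(T^1)=\D$, since one must have $x\in\D$ in order to form $Tx$. Consequently, for $x\in\D(T^\infty)$ both earlier theorems are applicable, the first because $x\in\D(T^\infty)$ and the second because $x\in\D$.

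First I would invoke Theorem~\ref{thm:localspectralradiusexpression}, which, under the basic assumption together with (A1) and (A2), gives for $x\in\D(T^\infty)$ the identity
\[
\lim_{n\to\infty}\|T_e^n x\|^{1/n}=\sup\left\{|z| : z\in\left[\eig(\supp\F x)\right]^\cl\right\}.
\]
Next I would invoke Theorem~\ref{thm:localspectrum}, which, under the basic assumption and (A3), identifies the local spectrum as $\sigma_{T_e}(x)=\left[\eig(\supp\F x)\right]^\cl$ for every $x\in\D$, and hence in particular for our $x\in\D(T^\infty)$. Substituting this description of the local spectrum into the right-hand side of the displayed limit immediately yields
\[
\lim_{n\to\infty}\|T_e^n x\|^{1/n}=\sup\left\{|z| : z\in\sigma_{T_e}(x)\right\},
\]
which is the asserted local spectral radius formula.

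The salient feature of this argument is that, at this final stage, there is essentially no obstacle to overcome: all the analytic substance has already been absorbed into the preceding results. The genuine difficulties lie upstream. In Theorem~\ref{thm:localspectralradiusexpression} they are hidden in Proposition~\ref{prop:limsupinequality}, where (A2) is used to dominate $\|T^n x\|$ by $\left[\sup_\lambda|\eig(\lambda)|\right]^n$ times a convergent series; and in Theorem~\ref{thm:localspectrum} they reside in the explicit construction of an analytic local resolvent in the sequence space $\S(\Lambda)$, relying crucially on the topological isomorphism in (A3). The only points one must genuinely verify here are the inclusion $\D(T^\infty)\subseteq\D$ recorded above, which guarantees that the hypotheses of both theorems are met simultaneously, and the tautological fact that the quantity $\sup\{|z| : z\in A\}$ depends only on the set $A=\left[\eig(\supp\F x)\right]^\cl$ common to both conclusions, so that the two right-hand sides coincide verbatim.
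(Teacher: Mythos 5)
Your proposal is correct and is exactly the argument the paper intends: the theorem is presented as a direct consequence of Theorem~\ref{thm:localspectralradiusexpression} and Theorem~\ref{thm:localspectrum}, combined via the common set $\left[\eig(\supp\F x)\right]^\cl$, just as you do. Your explicit check that $\D(T^\infty)\subseteq\D$, so that both theorems apply simultaneously, is a small point the paper leaves implicit but is entirely in the same spirit.
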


\begin{remark}Under the hypotheses of Theorem~\ref{thm:localspectralradiusformula}, the Theorems~\ref{thm:A1SVEP} and \ref{thm:localspectrum} also hold. Hence $T_e$ has the single-valued extension property, and the local spectrum $\sigma_{T_e}(x)$ of $T_e$ at $x\in\D(T^\infty)\subset\D$ is known.
\end{remark}

\begin{remark}
The validity of a local spectral radius formula as asserted in Theorem~\ref{thm:localspectralradiusformula} is, one could say, established ``by inspection". Indeed, for the diagonalisable operator operator under consideration, one can explicitly determine what the local spectrum at a suitable point is, hence what the corresponding local spectral radius (in its natural definition) is, and one can also determine $\limsup _{n\to \infty} \|T^n x\|^{1/n}$ (which, incidentally, is the corresponding limit). Then one simply observes that these two extended positive real numbers are equal. This is a difference with the demonstration, in Section~\ref{subsec:introductionandliterature}, of the validity of a local spectral radius formula for a globally defined bounded operator with the single-valued extension property, where the local spectrum itself did not appear.
\end{remark}

\section{Examples}\label{sec:examples}

In this Section, we collect a number of concrete applications of the results in Section~\ref{sec:resultsandproofs}, along the lines as discussed in Section~\ref{subsec:frameworkandresults}. Based on the informal general principle on the relation between smoothness and convergence, as mentioned in that discussion, we trust that there are more, even though they have perhaps not yet been identified as such (see also Section~\ref{subsec:classofexamples}).

\subsection{Constant coefficient differential operators on $\T$}\label{subsec:Fourier}
Let $\T$ be the 1-torus, with total rotation invariant measure equal to 1, and let $\D = C^\infty (\T)$. As usual, we will identify functions on $\T$ with $2\pi$-periodic functions on $\R$. Reviewing basic notions from Fourier analysis and distribution theory, we introduce a Fr\'echet topology on $\D$ by means of the seminorms $p_k$, for $k\in\N_0$, where, for $g\in\D$, $p_k(g)=\max_{t\in[0,2\pi]}|g^{(k)}(t)|$, and we let $\Ddual$ denote the continuous dual of $\D$, i.e., $\Ddual$ is the space of distributions on $\T$. For $\dist\in\Ddual$, and $g \in\D$, we write $\langle\dist, g\rangle$ for the canonical pairing. Then $\D$ can be identified with a subspace of $\Ddual$ when, via the usual pairing
\[
\langle f,g\rangle=\frac{1}{2\pi}\int_0^{2\pi} f(t)g(t)\,dt\qquad(g\in\D),
\]
we view $f\in\D$ as an element of $\Ddual$.

Let $P\in\C[X]$ be a polynomial in one variable, and let
\[
T=P(d/dt)
\]
be the corresponding constant coefficient differential operator, with domain $\D$. Obviously, $\D$ is invariant under $T$, and $T:\D\to\D$ is continuous. Since
\begin{equation}\label{eq:symmetryFourier}
\langle T f,g \rangle=\langle f,P(-d/dt)g\rangle\qquad(f,g\in\D),
\end{equation}
we can extend $T$ from the embedded copy of $\D$ in $\Ddual$ to $\Ddual$ by
\begin{equation}\label{eq:extensiontodistributionsFourier}
\langle T\dist,g \rangle=\langle \dist,P(-d/dt)g\rangle\qquad(\dist\in\Ddual,\,g\in\D).
\end{equation}

We will employ the usual Fourier transform as a diagonalising transform for $T$ on $\D$. In fact, since it also diagonalises $T$ on $\Ddual$ it will, as a consequence, diagonalise the extensions of $T$ to be defined below in particular. For $n\in\Z$, let
\[
e_n(t)=e^{int}\qquad(t\in[0,2\pi]),
\]
so that
\begin{equation}\label{eq:eigenvalueFourier}
Te_n=P(in)e_n\qquad(n\in\Z).
\end{equation}

Let $\S(\Z)$ be the space of rapidly decreasing sequences on $\Z$, in its usual Fr\'echet topology, and define $\F:\D\to \S(\Z)$ by
\begin{align}\label{eq:transformfunctionsFourier}
\F(f)(n)&=\langle f,\overline{e}_n\rangle\\
&=\frac{1}{2\pi}\int_0^{2\pi} f(t)e^{-int}\,dt \qquad (f\in \D,\, n\in\Z).\notag
\end{align}
Since $\D\subset L^2(\T)$, and $\{e_n\}_{n\in\Z}$ is an orthonormal basis of $L^2(\T)$, a standard argument, using \eqref{eq:symmetryFourier} for $P=X$ and \eqref{eq:eigenvalueFourier}, implies that the image of an element of $\D$ under $\F$ is, in fact, an element of $\S(\Z)$, as was implicitly used in the notation above. However, as is well known, cf.\ \cite[Theorem~51.3]{Tre}, $\F$ is actually even an isomorphism of topological vector spaces between $\D$ and $\S(\Z)$, and, for $f\in\D$, the series
\begin{equation}\label{eq:seriesFourier}
\sum_{n\in\mathbb Z}\F(f)(n)e_n
\end{equation}
does not just converge to $f$ in $L^2(\T)$, as is obvious, but it also converges to $f$ in the topology of $\D$.

We can extend $\F$ from $\D$ to $\Ddual$, obtaining an injective map $\F:\Ddual\to\prod_{n\in\Z}\C$, by defining
\[
\F(\dist)(n)=\langle \dist,\overline{e}_n\rangle \qquad (\dist\in \Ddual,\, n\in\Z).
\]

Naturally, the image of $\F$ consists precisely of the slowly increasing sequences on $\Z$ (the continuous dual of $\S(\Z)$), but we will not need this. The relevant property is that $\F$ diagonalises $T$ on $\Ddual$, by the usual argument. Indeed, for $\dist\in\Ddual$, and $n\in\Z$,
\begin{align}\label{eq:diagonalisationFourier}
\F (T \dist)(n)&=\langle T \dist,\overline{e}_n\rangle
\\&=\langle \dist, P(-d/dt)\overline{e}_n\rangle\notag
\\&=\langle \dist, P(in)\overline{e}_n\rangle\notag
\\&=P(in)(\F \dist)(n).\notag
\end{align}

We now come to the application of the results in Section~\ref{sec:resultsandproofs}. Let $X= L^p(\T)$, for some fixed $1\leq p \leq \infty$. We view $\D$ as a subspace of $X$, so that $T$ is an operator on $X$ with domain $\D$. Furthermore, we embed $X$ as an abstract vector space into $\Ddual$ via the pairing
\[
\langle f,g\rangle=\frac{1}{2\pi}\int_0^{2\pi} f(t)g(t)\,dt\qquad(f\in X,\,g\in\D).
\]
Let $T_c$ be the operator on $X$ with domain $\D_c$, consisting of those $f\in X$ such that the distribution $Tf$, as defined in \eqref{eq:extensiontodistributionsFourier}, is in $X$, and defined, for such $f$, by $T_c f=Tf$. Then $ T_c$ is an extension of $T$, which, incidentally, is easily seen to be closed.

We will now proceed to show that the results in Section~\ref{sec:resultsandproofs} apply to each operator $T_e$ on $X$, such that
\begin{equation}\label{eq:extensionFourier}
T\subset T_e\subset T_c.
\end{equation}
To start with, as a consequence of \eqref{eq:diagonalisationFourier}, the injective map $\F$ on $\D_e$ then certainly diagonalises $T_e$ on $\D_e\subset\Ddual$, as required in the basic assumption \eqref{eq:diagonalisingtransform}, the eigenvalues being given by $\eig(n)=P(in)$ ($n\in\Z$). Hence the basic assumption is satisfied. Assumption (A1) is satisfied since $e_n\in L^q(\T)$ $(n\in\Z)$, where $q$ is the conjugate exponent of $p$. Assumption (A2) is also valid, since, for $f\in\D$, the series in \eqref{eq:seriesFourier} is absolutely convergent in $X$. Indeed,
\[
\sum_{n\in\mathbb Z}\Vert \F(f)(n)e_n\Vert_p=\sum_{n\in\Z}|\F f(n)|<\infty,
\]
as $\F f\in\S(\Z)$. Moreover, since we know already that the series in \eqref{eq:seriesFourier} converges to $f$ in the topology of $\D$, it also converges to $f$ in $X$, by continuity of the inclusion of $\D$ into $X$ (which, in the passing, is the first part of assumption (A3)). Hence assumption (A2) is satisfied. As already observed above, $\F$ is a topological isomorphism between $\D$ and $\S(\Z)$, which is the remaining part of (A3). We thus see that the basic assumption and (A1), (A2), and (A3) in Section~\ref{subsec:frameworkandresults} are all satisfied. Consequently, the Theorems~\ref{thm:A1SVEP}, \ref{thm:localspectralradiusexpression}, \ref{thm:localspectrum}, and \ref{thm:localspectralradiusformula} hold for each operator $T_e$ as in \eqref{eq:extensionFourier}. To summarise, we have the following.

\begin{theorem}\label{thm:summarizingtheoremFourier}
Let $1\leq p\leq\infty$. Let $\D=C^\infty(\T)$, let $P$ be a polynomial in one variable, and let $T=P(d/dt)$ be the corresponding constant coefficient differential operator, viewed as an operator on $L^p(\T)$ with domain $\D$. Let $T_e$ be an extension of $T$ on $L^p(\T)$ as in \eqref{eq:extensionFourier}. Then $T_e$ has the single-valued extension property.

Let $f\in\D$, and let $\F f:\Z\to\C$ be the Fourier transform of $f$ as in \eqref{eq:transformfunctionsFourier}. Then
\begin{equation}\label{eq:localspectralradiusformulaFourier}
\lim_{n\to\infty}\Vert T_e^n f\Vert_p^{1/n}=\sup\left\{|z| : z\in \{P(in) : n\in\Z,\, \F f(n)\neq 0\}^\cl\right\}
\end{equation}
in the extended positive real numbers. Moreover,
\[
\sigma_{T_e}(f)=\{P(in) : n\in\Z,\, \F f(n)\neq 0\}^\cl,
\]
so that \eqref{eq:localspectralradiusformulaFourier} is a local spectral radius formula for $T_e$ at $f$.
\end{theorem}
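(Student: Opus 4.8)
The plan is to recognize that Theorem~\ref{thm:summarizingtheoremFourier} is not a fresh result requiring a new argument, but rather a consolidation of the abstract machinery already developed in Section~\ref{sec:resultsandproofs}, specialised to the concrete setting $X=L^p(\T)$, $\D=C^\infty(\T)$, and $T=P(d/dt)$. The entire content of the proof is therefore the verification that the basic assumption and the hypotheses (A1), (A2), and (A3) all hold for every operator $T_e$ satisfying $T\subset T_e\subset T_c$; once this is done, the four conclusions follow by direct invocation of Theorems~\ref{thm:A1SVEP}, \ref{thm:localspectralradiusexpression}, \ref{thm:localspectrum}, and \ref{thm:localspectralradiusformula}, with the eigenvalue map identified as $\eig(n)=P(in)$.

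First I would record the diagonalisation: extending $\F$ from $\D$ to $\Ddual$ via $\F(\dist)(n)=\langle\dist,\overline{e}_n\rangle$, the computation \eqref{eq:diagonalisationFourier} shows $\F(T\dist)(n)=P(in)\F(\dist)(n)$, so that the basic assumption \eqref{eq:diagonalisingtransform} holds with $\Lambda=\Z$, $L_n=\C$, and $\eig(n)=P(in)$; injectivity of $\F$ on $\D_e\subset\Ddual$ is inherited from its injectivity on all of $\Ddual$. Next I would verify the three additional assumptions in turn. For (A1), the functional $f\mapsto\F f(n)=\langle f,\overline{e}_n\rangle$ is continuous on $\D_e$ in the $L^p$-topology because $e_n\in L^q(\T)$ with $q$ the conjugate exponent, so this is just H\"older's inequality. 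For (A2) and the first half of (A3), the key input is the classical fact (cf.\ \cite[Theorem~51.3]{Tre}) that $\F$ is a topological isomorphism of $\D=C^\infty(\T)$ onto $\S(\Z)$ and that the Fourier series \eqref{eq:seriesFourier} converges to $f$ in the Fr\'echet topology of $\D$; absolute convergence in $L^p$ then follows from $\sum_{n\in\Z}\Vert\F f(n)e_n\Vert_p=\sum_{n\in\Z}|\F f(n)|<\infty$ since $\F f$ is rapidly decreasing and $\Vert e_n\Vert_p=1$. Continuity of the inclusion $\D\hookrightarrow L^p(\T)$ is immediate, and the isomorphism statement just quoted is precisely the remaining half of (A3).

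With all hypotheses in place, the conclusions assemble mechanically: the single-valued extension property is Theorem~\ref{thm:A1SVEP}; the identification $\sigma_{T_e}(f)=[\eig(\supp\F f)]^\cl=\{P(in):n\in\Z,\,\F f(n)\neq 0\}^\cl$ for $f\in\D$ is Theorem~\ref{thm:localspectrum}; and the limit formula $\lim_{n\to\infty}\Vert T_e^n f\Vert_p^{1/n}=\sup\{|z|:z\in[\eig(\supp\F f)]^\cl\}$ is Theorem~\ref{thm:localspectralradiusexpression}, which is the right-hand side of \eqref{eq:localspectralradiusformulaFourier}. Combining the last two statements yields the local spectral radius formula of Theorem~\ref{thm:localspectralradiusformula}, completing the proof.

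Since the abstract theorems do the heavy lifting, there is no genuine analytic obstacle here; the only point requiring a moment's care is the \emph{uniformity} of the verification across the whole family of extensions $T_e$ with $T\subset T_e\subset T_c$. I would stress that (A1), (A2), and (A3) only constrain $\F$, the topology of $\D$, and the behaviour of $T$ on $\D$ and $\D(T^\infty)$ — data that are fixed independently of which particular extension $T_e$ one selects — while the diagonalisation \eqref{eq:diagonalisationFourier} holds on all of $\Ddual$ and hence a fortiori on any $\D_e$ with $\D\subset\D_e\subset\D_c$. Thus a single verification covers every admissible $T_e$ simultaneously, which is exactly the flexibility the abstract framework was designed to provide.
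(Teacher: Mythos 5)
Your proposal is correct and follows essentially the same route as the paper: verify the basic assumption via the distributional diagonalisation \eqref{eq:diagonalisationFourier} with $\eig(n)=P(in)$, check (A1) via $e_n\in L^q(\T)$, (A2) via $\Vert e_n\Vert_p=1$ together with the rapid decrease of $\F f$ and the continuity of $\D\hookrightarrow L^p(\T)$, and (A3) via the topological isomorphism $\F:\D\to\S(\Z)$ from \cite[Theorem~51.3]{Tre}, then invoke Theorems~\ref{thm:A1SVEP}, \ref{thm:localspectralradiusexpression}, \ref{thm:localspectrum}, and \ref{thm:localspectralradiusformula}. Your closing remark that the verification is uniform over all extensions $T\subset T_e\subset T_c$ because the hypotheses constrain only $\F$, $\D$, and the diagonalisation on $\Ddual$ is exactly the point the paper relies on as well.
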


\begin{remark}\label{rem:compactsymmetricspaces} As a particular case of the above, we see that, for $1\le p\le \infty$:
\[
\lim _{n\to \infty} \left \|\frac{d^n f}{dt^n}\right \|_p^{1/n} = \sup\left \{|\F f(k)| : k\in\supp \F f\right\} ,
\]
for $f\in C^\infty (\T)$. This is \cite[Theorem~2]{Ba}, where the result was established using Paley-Wiener theory and the Bernstein inequality.
\end{remark}
\begin{remark}
Theorem~\ref{thm:summarizingtheoremFourier} has an obvious generalisation to the $d$-torus for $d\geq 2$, in which case the set-theoretical closure in the statement is not superfluous, as it is for $d=1$. In fact, it can be generalised to all compact connected Lie groups. This general case was treated in detail in \cite{AdJ2}. As already mentioned when discussing this generalisation as the fourth example in Section~\ref{subsec:introductionandliterature}, we surmise that similar results hold in still greater generality, namely for general compact symmetric spaces. The diagonalising transform will then have sequences  as its image, indexed by the irreducible unitary representations of the group occurring in the Plancherel formula for the symmetric space, and taking values in the operators on the corresponding Hilbert spaces. Such a situation is covered by the general formulation in Section~\ref{subsec:frameworkandresults}.
\end{remark}

\subsection{Jacobi operators}\label{subsec:Jacobi}
Let $\D=C^\infty[-1,1]$, carrying its usual Fr\'echet topology generated by the seminorms  $p_k$, for $k\in\N_0$, where, for $g\in\D$, $p_k(g)=\max_{t\in[-1,1]}|g^{(k)}(t)|$. We let $\Ddual$ denote the continuous dual of $\D$. As in Section~\ref{subsec:Fourier}, for $\dist\in\Ddual$, and $g \in\D$, we write $\langle\dist, g\rangle$ for the canonical pairing.
Let $\alpha,\beta > -1$, and consider the weight function
\begin{equation}\label{eq:weightJacobi}
w_{(\alpha,\beta)}(t)=(1-t)^\alpha(1+t)^\beta\qquad(t\in(0,1)).
\end{equation}
We identify $\D$ with a subspace of $\Ddual$ by letting $f\in\D$ act on $\D$ via
\[
\langle f,g\rangle=\int_{-1}^1 f(t)g(t)w_{(\alpha,\beta)}(t)\,dt\qquad(g\in\D).
\]

We will be concerned with the Jacobi-type differential operator
\begin{equation}\label{eq:operatorJacobi}
T_{(\alpha,\beta)}=(1-t^2)\frac{d^2}{dt^2}+[\beta-\alpha-(\alpha+\beta+2)t]\frac{d}{dt}
\end{equation}
with domain $\D$. Obviously, $\D$ is invariant under $T_{(\alpha,\beta)}$, and $T:\D\to\D$ is continuous. A routine computation yields that, for $f,g,\in\D$,
\begin{equation}\label{eq:symmetryJacobi}
\langle T_{(\alpha,\beta)}f,g\rangle=\langle f,T_{(\alpha,\beta)g}\rangle.
\end{equation}
Since $T_{(\alpha,\beta)}:\D\to\D$ is continuous, we can extend (in view of \eqref{eq:symmetryJacobi}) $T_{(\alpha,\beta)}$ from the embedded copy of $\D$ in $\D^\prime$ to $\D^\prime$ by
\begin{equation}\label{eq:extensiontodistributionsJacobi}
\langle T_{(\alpha,\beta)}\dist,g\rangle=\langle \dist, T_{(\alpha,\beta)g }\rangle\qquad(\dist\in\Ddual,\,g\in\D).
\end{equation}

We will use normalised Jacobi polynomials to diagonalise $T$ on $\D$, in fact on $\Ddual$.
For $n\in\N_0$, let
\begin{equation}\label{eq:definitionnormalisedJacobi}
p_n^{(\alpha,\beta)}(t)=\left(\frac{(2n+\alpha+\beta+1)\Gamma(n+\alpha+\beta+1)n!}{2^{\alpha+\beta+1}\Gamma(n+\alpha+1)\Gamma(n+\beta+1)}\right)^{\frac{1}{2}}P_n^{(\alpha,\beta)}(t)
\end{equation}
be the normalised Jacobi polynomial, where the Jacobi polynomial $P_n^{(\alpha,\beta)}$ of order $n$ is, as in \cite[22.2.1]{AbrSte}, \cite[10.8.(3)]{Erd}, and \cite[(4.1.1)]{Sze}, standardised by
\[
P_n^{(\alpha,\beta)}(1)={\binom{n+\alpha}{n}}.
\]
Then, for $n \in\N_0$, $p_n^{(\alpha,\beta)}$ is a real-valued element of $\D$, and, by \cite[22.6.1]{AbrSte}, \cite[10.8.(14)]{Erd}, or \cite[(4.2.1)]{Sze},
\begin{equation}\label{eq:eigenvalueJacobi}
T_{(\alpha,\beta)}p_n^{(\alpha,\beta)}=-n(n+\alpha+\beta+1)p_n^{(\alpha,\beta)}.
\end{equation}
For all $\alpha,\beta>-1$, $\{p_n^{(\alpha,\beta)}\}_{n=0}^\infty$ is an orthonormal basis of $L^2([-1,1],w_{(\alpha,\beta)}\,dt)$, cf.\ \cite[22.2.1]{AbrSte}, or \cite[(4.3.4)]{Sze}.

Let $\S(\N_0)$ be the space of rapidly decreasing sequences on $\N_0$, in its usual Fr\'echet topology, and define $\F_{(\alpha,\beta)}:\D\to \S(\N_0)$ by
\begin{align}\label{eq:transformfunctionsJacobi}
\F_{(\alpha,\beta)}(f)(n)&=\langle f,p_n^{(\alpha,\beta)}\rangle\\
&=\int_{-1}^1 f(t) p_n^{(\alpha,\beta)}(t)w_{(\alpha,\beta)}(t),dt\qquad(f\in\D,\,n\in\N_0).\notag
\end{align}
As in the previous example, it follows from \eqref{eq:symmetryJacobi} and \eqref{eq:eigenvalueJacobi} that the image of an element of $\D$ under $\F_{(\alpha,\beta)}$ is, in fact, an element of $\S(\N_0)$. Moreover, by \cite[Theorem~3.2]{GlaRun}, if $\alpha,\beta\geq-\frac{1}{2}$, then $\F_{(\alpha,\beta)}$ is an isomorphism of topological vector spaces between $\D$ and $\S(\N_0)$, and, for $f\in\D$, the series
\begin{equation}\label{eq:seriesJacobi}
\sum_{n=0}^\infty \F_{(\alpha,\beta)}(f)(n)p_n^{(\alpha,\beta)}
\end{equation}
converges to $f$ in the topology of $\D$.\footnote{The topological isomorphism between $\D$ and $\S(\N_0)$ in \cite{GlaRun} is defined in terms of polynomials with a normalisation other than ours. Since, as a consequence of Stirling's formula, that normalisation and ours differ by a slowly increasing sequence in $n$, the transform as defined with our normalisation is a topological isomorphism as well. The series in \cite[Theorem~3.2]{GlaRun} coincides with ours in \eqref{eq:seriesJacobi}.}

We can extend $\F_{(\alpha,\beta)}$ from $\D$ to $\Ddual$, obtaining an injective map $\F:\Ddual\to\prod_{n\in\N_0}\C$, by defining
\[
\F_{(\alpha,\beta)}(\dist)(n)=\langle \dist,p_n^{(\alpha,\beta)}\rangle \qquad (\dist\in \Ddual,\, n\in\N_0).
\]
As in \eqref{eq:diagonalisationFourier}, we find that
\[
\F_{(\alpha,\beta)} (T_{(\alpha,\beta)} \dist)(n)=-n(n+\alpha+\beta+1)\F\dist(n)\qquad(\dist\in\Ddual,\,n\in\N_0).
\]

We will now apply the results in Section~2, in a similar manner as in Section~\ref{subsec:Fourier}, but with an extra ingredient.  Let $X=L^p([-1,1],w_{(\alpha,\beta)}(t)\,dt)$, for some fixed $1\leq p\leq\infty$. We view $\D$ as a subspace of $X$, so that $T_{(\alpha,\beta)}$ is an operator on $X$ with domain $\D$. We embed $X$ as an abstract vector subspace into $\Ddual$ via the pairing
\[
\langle f,g \rangle=\int_{-1}^1 f(t)g(t)w_{(\alpha,\beta)}(t)\,dt\qquad(f\in X,\, g\in\D).
\]
Let $T_{(\alpha,\beta),c}$ be the operator on $X$ with domain $\D_c$, consisting of those $f\in X$ such that the distribution $T_{(\alpha,\beta)}f$, as defined in \eqref{eq:extensiontodistributionsJacobi}, is in $X$, and defined, for such $f$, by $T_{(\alpha,\beta), c} f=T_{(\alpha,\beta)}f$. Then $T_{(\alpha,\beta),c}$ is a closed extension of $T_{(\alpha,\beta)}$.

Let $T_e$ be an operator on $X$, such that
\begin{equation}\label{eq:extensionJacobi}
T_{(\alpha,\beta)}\subset T_e\subset T_{(\alpha,\beta),c}.
\end{equation}
As in Section~\ref{subsec:Fourier}, one verifies easily that the basic assumption and the assumptions (A1) and (A3) are satisfied.  As to assumption (A2), however, it now needs proof that, for $f\in\D$, the series in \eqref{eq:seriesJacobi} converges absolutely in $X$, and with sum $f$. For the Fourier series in the previous example this was immediately clear since $\Vert e_n\Vert_p=1$ $(n\in\Z, 1\leq p\leq\infty)$, but here we need the following additional argument. By \cite[22.14.1]{AbrSte}, or \cite[(7.32.2)]{Sze}, we have the estimate
\begin{equation}\label{eq:estimateJacobi}
|P_n^{(\alpha,\beta)}(t)|\leq {\binom{n+\max(\alpha,\beta)}{n}}\qquad(n\in\N_0,\,t\in[-1,1]),)
\end{equation}
for all $\alpha,\beta>-1$ such that $\max(\alpha,\beta)\geq -\frac{1}{2}$.\footnote{A different upper bound ${\binom{n+\max(\alpha,\beta)-1}{n}}$ is given in \cite[10.18.(12)]{Erd}. While this seems to be a misprint, this upper bound would also fit into our argument.}
For such $\alpha$ and $\beta$, hence in particular when $\alpha,\beta\geq-\frac{1}{2}$, the combination of \eqref{eq:definitionnormalisedJacobi}, \eqref{eq:estimateJacobi}, and Stirling's formula shows that the sequence $\{\max_{t\in[-1,1]}|p_n^{(\alpha,\beta)}(t)|\}_{n=0}^\infty$ is slowly increasing. The same is then true for the sequence of norms $\{\Vert p_n^{(\alpha,\beta)}\Vert\}_{n=0}^\infty$ in $X=L^p([-1,1],w_{(\alpha,\beta)}(t)\,dt)$. Since the coefficients $\F_{(\alpha,\beta)}(f)(n)$ in \eqref{eq:seriesJacobi} form a rapidly decreasing sequence whenever $f\in\D$, this implies that, for such $f$, the series in \eqref{eq:seriesJacobi} is absolutely convergent in $X$, as required. That the series converges to $f$ in $X$ follows, as in the previous example, from the fact that this is the case in $\D$, combined with the continuity of the inclusion of $\D$ into $X$. Hence assumption (A2) is also satisfied after all, and we have the following result.

\begin{theorem}\label{thm:summarizingtheoremJacobi}
Let $1\leq p\leq\infty$, let $\alpha,\beta\geq -\frac{1}{2}$, and let $w_{(\alpha,\beta)}$ and $T_{(\alpha,\beta)}$ be as in \eqref{eq:weightJacobi} and \eqref{eq:operatorJacobi}, respectively. Let $\D=C^\infty[-1,1]$. View $T$ as an operator on $L^p([-1,1],w_{(\alpha,\beta)}\,dt)$ with domain $\D$. Let $T_e$ be an extension of $T_{(\alpha,\beta)}$ on $L^p([-1,1],w_{(\alpha,\beta)}\,dt)$ as in \eqref{eq:extensionJacobi}. Then $T_e$ has the single-valued extension property.

Let $f\in\D$, and let $\F_{(\alpha,\beta)} f:\N_0\to\C$ be the Jacobi transform of $f$ as in \eqref{eq:transformfunctionsJacobi}.  Then
\begin{equation}\label{eq:localspectralradiusformulaJacobi}
\lim_{n\to\infty}\Vert T_e^n f\Vert_p^{1/n}=\sup\left\{|z| : z\in \{-n(n+\alpha+\beta+1) : n\in\N_0,\, \F_{(\alpha,\beta)} f(n)\neq 0\}\right\}
\end{equation}
in the extended positive real numbers. Moreover,
\[
\sigma_{T_e}(f)=\{-n(n+\alpha+\beta+1) : n\in\N_0,\, \F_{(\alpha,\beta)} f(n)\neq 0\},
\]
so that \eqref{eq:localspectralradiusformulaJacobi} is a local spectral radius formula for $T_e$ at $f$.
\end{theorem}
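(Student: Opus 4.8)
The plan is to verify that Theorem~\ref{thm:summarizingtheoremJacobi} is an immediate application of the abstract machinery in Section~\ref{sec:resultsandproofs}, so the real work is checking that the basic assumption and (A1), (A2), (A3) all hold in the Jacobi setting; once that is done, Theorems~\ref{thm:A1SVEP}, \ref{thm:localspectrum}, and \ref{thm:localspectralradiusformula} yield every assertion at once. Concretely, I take $X=L^p([-1,1],w_{(\alpha,\beta)}\,dt)$, $\D=C^\infty[-1,1]$, the extension $\D_e=\D_c$ with $T_e=T_{(\alpha,\beta),c}$ (and, more generally, any $T_e$ as in \eqref{eq:extensionJacobi}), the index set $\Lambda=\N_0$, each $L_\lambda=\C$, the transform $\F=\F_{(\alpha,\beta)}$ extended to $\Ddual$, and the eigenvalue map $\eig(n)=-n(n+\alpha+\beta+1)$. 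The diagonalisation identity \eqref{eq:diagonalisingtransform} holds because $\F_{(\alpha,\beta)}(T_{(\alpha,\beta)}\dist)(n)=-n(n+\alpha+\beta+1)\F_{(\alpha,\beta)}\dist(n)$, which follows from the self-adjointness relation \eqref{eq:symmetryJacobi} together with the eigenvalue equation \eqref{eq:eigenvalueJacobi}; injectivity of $\F$ on $\D_e\subset\Ddual$ is clear since the $p_n^{(\alpha,\beta)}$ span the polynomials, which are dense in test functions. This establishes the basic assumption.

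For (A1), I note that the functional $f\mapsto\F_{(\alpha,\beta)}(f)(n)=\langle f,p_n^{(\alpha,\beta)}\rangle$ is, for each fixed $n$, continuous in the $X=L^p$ topology, because $p_n^{(\alpha,\beta)}$ is a bounded function and hence lies in the conjugate space $L^q$; so Theorem~\ref{thm:A1SVEP} applies and $T_e$ has the single-valued extension property. For (A3), I set $\seminormfunction_k(n)=(1+n)^k$ so that $\S(\Lambda)$ is exactly $\S(\N_0)$, take the maps $G_n:\C\to\D$ by $G_n(z)=z\,p_n^{(\alpha,\beta)}$, and invoke \cite[Theorem~3.2]{GlaRun} (valid for $\alpha,\beta\geq-\tfrac12$) which gives that $\F_{(\alpha,\beta)}$ is a topological isomorphism of $\D$ onto $\S(\N_0)$, the inclusion $\D\hookrightarrow X$ being trivially continuous; the footnote resolves the discrepancy in normalisation by observing it only alters coefficients by a slowly increasing factor. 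Theorem~\ref{thm:localspectrum} then gives $\sigma_{T_e}(f)=[\eig(\supp\F f)]^\cl=\{-n(n+\alpha+\beta+1):\F_{(\alpha,\beta)}f(n)\neq0\}$, the closure being redundant here since the eigenvalues $-n(n+\alpha+\beta+1)\to-\infty$ form a discrete set with no finite accumulation point.

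The genuinely nontrivial step, and the one I expect to be the main obstacle, is the absolute convergence demanded in (A2): for $f\in\D$ I must show $\sum_{n=0}^\infty\|G_n(\F_{(\alpha,\beta)}f(n))\|_p=\sum_{n=0}^\infty|\F_{(\alpha,\beta)}f(n)|\,\|p_n^{(\alpha,\beta)}\|_p<\infty$. Unlike the torus case, where $\|e_n\|_p=1$, the $L^p$-norms of the normalised Jacobi polynomials grow with $n$, so a size estimate is needed. The plan is to combine the uniform pointwise bound \eqref{eq:estimateJacobi}, namely $|P_n^{(\alpha,\beta)}(t)|\leq\binom{n+\max(\alpha,\beta)}{n}$ valid when $\max(\alpha,\beta)\geq-\tfrac12$ (hence certainly for $\alpha,\beta\geq-\tfrac12$), with the normalisation constant in \eqref{eq:definitionnormalisedJacobi} and Stirling's formula, to conclude that $\{\max_{t}|p_n^{(\alpha,\beta)}(t)|\}_n$, and therefore $\{\|p_n^{(\alpha,\beta)}\|_p\}_n$, is only slowly increasing in $n$. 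Since $\F_{(\alpha,\beta)}f\in\S(\N_0)$ is rapidly decreasing, the product of a rapidly decreasing sequence with a slowly increasing one is summable, giving the required absolute convergence; that the sum equals $f$ in $X$ then follows from convergence in the finer $\D$-topology together with continuity of $\D\hookrightarrow X$. With (A1), (A2), (A3) all in hand, Theorem~\ref{thm:localspectralradiusformula} delivers the local spectral radius formula \eqref{eq:localspectralradiusformulaJacobi}, completing the proof.
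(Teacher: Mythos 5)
Your proposal is correct and follows essentially the same route as the paper: verify the basic assumption, (A1), and (A3) exactly as in the torus example, and isolate (A2) as the one nontrivial point, handled via the uniform bound \eqref{eq:estimateJacobi}, the normalisation \eqref{eq:definitionnormalisedJacobi}, and Stirling's formula to show that $\{\Vert p_n^{(\alpha,\beta)}\Vert_p\}_{n=0}^\infty$ is slowly increasing, after which the abstract Theorems~\ref{thm:A1SVEP}, \ref{thm:localspectrum}, and \ref{thm:localspectralradiusformula} give all assertions. This matches the paper's argument in every essential detail, including the observation that the eigenvalue set is closed so the closure may be dropped.
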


\subsection{Hermite operator}\label{subsec:Hermite}
Let $\D=\S(\R)$, the Schwartz space of rapidly decreasing functions on $\R$, carrying its Fr\'echet topology generated by the seminorms $p_{k,n}$, for $k,n\in\N_0$, where, for $g\in\S(\R)$, $p_{k,n}(g)=\sup_{t\in[0,\infty)}t^k|g^{(n)}(t)|$. Its dual $\Ddual$ consists of the tempered distributions on $\R$, and we write $\langle\dist,g\rangle$ for the canonical pairing between $\dist\in\Ddual$ and $g\in\D$. We identify $\D$ with a subspace of $\Ddual$, by letting $f\in\D$ act on $\D$ via
\[
\langle f,g\rangle=\int_{\R}f(t)g(t)\,dt\qquad(g\in\D).
\]

Consider the Hermite operator
\begin{equation}\label{eq:operatorHermite}
\frac{d^2}{dt^2}-t^2.
\end{equation}
Obviously, $\D$ is invariant under $T$, and $T:\D\to\D$ is continuous. It is virtually immediate that, for $f,g\in\D$,
\begin{equation}\label{eq:symmetryHermite}
\langle Tf,g\rangle=\langle f,Tg\rangle.
\end{equation}
We extend $T$ from $\D\subset\Ddual$ to $\Ddual$ by
\begin{equation}\label{eq:extensiontodistributionsHermite}
\langle T\dist,g\rangle=\langle \dist,Tg\rangle.
\end{equation}

For $n\in\N_0$, and $t\in\R$, let, as in \cite[p.142]{ReeSim}, or \cite[(1.1.2) and (1.1.18)]{Tha},
\begin{align*}
h_n(t)&=(-1)^n\left( 2^n n! \sqrt{\pi} \right)^{-\frac{1}{2}}e^{\frac{1}{2}t^2}\frac{d^n}{dt^n}e^{-t^2}\\
&=\left( 2^n n! \sqrt{\pi} \right)^{-\frac{1}{2}} H_n(t)e^{-\frac{1}{2}t^2},
\end{align*}
where
\[
H_n(t)=(-1)^n e^{t^2}\frac{d^n}{dt^n}e^{-t^2}\qquad(t\in\R)
\]
is the Hermite polynomial of order $n$, cf.\ \cite[22.11.7]{AbrSte}, \cite[10.13.(7)]{Erd}, or \cite[(1.1.1]{Tha}. Then, for $n\in\N_0$, $h_n$ is a real-valued element of $\D$, and, by \cite[22.6.20]{AbrSte}, \cite[10.13.(13)]{Erd}, or \cite[p.142]{ReeSim},
\begin{equation}\label{eq:eigenvalueHermite}
Th_n=-(2n+1) h_n.
\end{equation}
Furthermore, $\{h_n\}_{n=0}^\infty$ is an orthonormal basis of $L^2(\R,dx)$; cf.\ \cite[22.2.14]{AbrSte}, \cite[(5.5.1)]{Sze} for orthonormality, and \cite[(5.7.2)]{Sze} for completeness.

Let $\S(\N_0)$ be the space of rapidly decreasing sequences on $\N_0$, in its usual Fr\'echet topology, and define $\F:\D\to \S(\N_0)$ by
\begin{align}\label{eq:transformfunctionsHermite}
\F(f)(n)&=\langle f,h_n\rangle\\
&=\int_\R f(t)h_n(t)\,dt \qquad (f\in \D,\, n\in\N_0).\notag
\end{align}
Then \eqref{eq:symmetryHermite} and \eqref{eq:eigenvalueHermite} imply that the image of an element of $\D$ under $\F$ is, in fact, an element of $\S(\N_0)$. Moreover, by \cite[Theorem~V.13]{ReeSim} and its proof, see also \cite[p.262]{Schw}, $\F$ is an isomorphism of topological vector spaces between $\D$ and $\S(\N_0)$, and, for $f\in\D$, the series
\begin{equation}\label{eq:seriesHermite}
\sum_{n=0}^\infty \F(f)(n)h_n
\end{equation}
converges to $f$ in the topology of $\D$.

We can extend $\F$ from $\D$ to $\Ddual$, obtaining an injective map $\F:\Ddual\to\prod_{n\in\N_0}\C$, by defining
\[
\F(\dist)(n)=\langle \dist,h_n\rangle \qquad (\dist\in \Ddual, \, n\in\N_0).
\]
As in \eqref{eq:diagonalisationFourier}, we find that
\[
\F (T \dist)(n)=-(2n+1)h_n.
\]

We can now apply the results in Section~2, similarly as in the previous examples. Let $X=L^p(\R,dt)$. We view $\D$ as a subspace of $X$, and we embed $X$ into $\Ddual$ via the pairing
\[
\langle f,g \rangle=\int_{\R} f(t)g(t)\,dt\qquad(f\in X,\, g\in\D).
\]
We let $T$ be the operator on $X$ with domain $\D_c$, consisting of those $f\in X$ such that the distribution $Tf$, as defined in \eqref{eq:extensiontodistributionsHermite}, is in $X$, and defined, for such $f$, by $T_cf=Tf$. Then $T_c$ is a closed extension of $T$. Let $T_e$ be an operator on $X$, such that
\begin{equation}\label{eq:extensionHermite}
T\subset T_e\subset T_c.
\end{equation}

As in Section~\ref{subsec:Jacobi}, one verifies easily that the basic assumption and the assumptions (A1) and (A3) are satisfied, whereas it needs proof that, for $f\in\D$, the series in \eqref{eq:seriesHermite} converges absolutely in $X$, and with sum $f$. To verify this, we note that the sequence of norms $\{\Vert h_n\Vert\}_{n=0}^\infty$ in $X=L^p([0,\infty),dt)$ is slowly increasing, by \cite[Lemma~1.5.2]{Tha}.
Since the coefficients $\F(f)(n)$ in \eqref{eq:seriesHermite} form a rapidly decreasing sequence whenever $f\in\D$, this implies, as before, that, for such $f$, the series in \eqref{eq:seriesHermite} is absolutely convergent in $L^p(\R,dt)$, as required. As before, it converges to $f$ in $X$ because it does so in $\D$, and the inclusion of $\D$ into $X$ is continuous. Hence assumption (A2) is satisfied again, and we have the following result.

\begin{theorem}\label{thm:summarizingtheoremHermite}
Let $1\leq p\leq\infty$. Let $\D=\S(\R)$. Let $T$ be as in \eqref{eq:operatorHermite}, and view $T$ as an operator on $L^p(\R,dt)$ with domain $\D$. Let $T_e$ be an extension of $T$ on $L^p(\R,dt)$ as in \eqref{eq:extensionHermite}. Then $T_e$ has the single-valued extension property.

Let $f\in\D$, and let $\F f:\N_0\to\C$ be the Hermite transform of $f$ as in \eqref{eq:transformfunctionsHermite}. Then
\begin{equation}\label{eq:localspectralradiusformulaHermite}
\lim_{n\to\infty}\Vert T_e^n f\Vert_p^{1/n}=\sup\left\{|z| : z\in \{-(2n+1) : n\in\N_0,\, \F f(n)\neq 0\}\right\}
\end{equation}
in the extended positive real numbers. Moreover,
\[
\sigma_{T_e}(f)=\{-(2n+1) : n\in\N_0,\, \F f(n)\neq 0\},
\]
so that \eqref{eq:localspectralradiusformulaHermite} is a local spectral radius formula for $T_e$ at $f$.
\end{theorem}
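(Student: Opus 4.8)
The plan is to verify that Theorem~\ref{thm:summarizingtheoremHermite} is nothing more than a specialisation of the abstract machinery already developed, exactly as was done for the Fourier and Jacobi examples. The strategy therefore splits into two halves: first, confirm that the basic assumption together with (A1), (A2), and (A3) all hold in the present Hermite setting, and second, read off the three conclusions of the theorem directly from Theorems~\ref{thm:A1SVEP}, \ref{thm:localspectralradiusexpression}, \ref{thm:localspectrum}, and \ref{thm:localspectralradiusformula}, with $\Lambda=\N_0$, $L_n=\C$, and $\eig(n)=-(2n+1)$.

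For the verification of the hypotheses, I would proceed as follows. The basic assumption is immediate from the displayed diagonalisation identity $\F(T\dist)(n)=-(2n+1)\F\dist(n)$, which holds on all of $\Ddual$ and in particular on $\D_e$ for any $T_e$ with $T\subset T_e\subset T_c$; injectivity of $\F$ on $\D_e\subset\Ddual$ is already established. Assumption (A1) holds because, for each fixed $n$, the functional $f\mapsto\F f(n)=\langle f,h_n\rangle$ is the pairing against $h_n\in L^q(\R,dt)$ (with $q$ the conjugate exponent), hence continuous in the $L^p$-norm by H\"older's inequality. Assumption (A3) is the statement, already cited from \cite[Theorem~V.13]{ReeSim}, that $\F:\D\to\S(\N_0)$ is a topological isomorphism, together with the innocuous continuity of the inclusion $\S(\R)\hookrightarrow L^p(\R,dt)$, with the maps $G_n:\C\to\D$ given by $G_n(z)=z\,h_n$.

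The one step that requires genuine work, and which I expect to be the main obstacle, is assumption (A2): the absolute convergence in $X=L^p(\R,dt)$ of the series $\sum_{n=0}^\infty\F(f)(n)h_n$ for $f\in\D$. Unlike the Fourier case, where $\Vert e_n\Vert_p=1$, the $L^p$-norms $\Vert h_n\Vert_p$ grow with $n$, so absolute convergence is not automatic from membership of $\{\F f(n)\}$ in $\S(\N_0)$. The plan is to invoke the estimate from \cite[Lemma~1.5.2]{Tha}, which guarantees that the sequence $\{\Vert h_n\Vert_p\}_{n=0}^\infty$ is slowly increasing; pairing this against the rapidly decreasing coefficient sequence $\{\F f(n)\}_{n=0}^\infty$ (rapid decay being exactly assumption (A3)'s content via the isomorphism onto $\S(\N_0)$) then forces $\sum_n\Vert\F f(n)\,h_n\Vert_p<\infty$. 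That the sum equals $f$ follows because the series already converges to $f$ in the finer topology of $\D$, and the inclusion $\D\hookrightarrow X$ is continuous.

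With all four hypotheses in hand, the conclusions are automatic. Theorem~\ref{thm:A1SVEP} yields the single-valued extension property for $T_e$; Theorem~\ref{thm:localspectrum} gives $\sigma_{T_e}(f)=[\eig(\supp\F f)]^\cl$, which here is $\{-(2n+1):n\in\N_0,\ \F f(n)\neq 0\}$; and since this set of eigenvalues is a subset of the discrete set $\{-(2n+1):n\in\N_0\}$ whose only accumulation point is at $-\infty$, the set-theoretic closure adds no finite points, so the closure may be dropped in the statement. Finally Theorem~\ref{thm:localspectralradiusformula} delivers the local spectral radius formula \eqref{eq:localspectralradiusformulaHermite}, the supremum on the right-hand side being taken over precisely this local spectrum.
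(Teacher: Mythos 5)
Your proposal follows the paper's own argument essentially verbatim: verify the basic assumption and (A1), (A3) routinely, isolate (A2) as the one non-trivial point, and settle it by combining the slow growth of $\{\Vert h_n\Vert_p\}_{n=0}^\infty$ from \cite[Lemma~1.5.2]{Tha} with the rapid decay of the Hermite coefficients, then read off the conclusions from Theorems~\ref{thm:A1SVEP}, \ref{thm:localspectrum}, and \ref{thm:localspectralradiusformula}. This is correct; your added remark that the closure may be dropped because the eigenvalue set is discrete with no finite accumulation point is a small but accurate clarification the paper leaves implicit.
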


\begin{remark}
Actually, the expansion with respect to the product of one-variable Hermite functions provides a topological isomorphism between $\S(\R^d)$ and the rapidly decreasing sequences on $\N_0^n$ for arbitrary $d=1,2,\ldots$, according to \cite[Theorem~V.13]{ReeSim}. Hence the results in the present example generalise easily to arbitrary dimension.
 \end{remark}

\subsection{Laguerre operators}\label{subsec:Laguerre}
Let $\S^+$ consist of the restrictions of the elements of $\S(\R)$ to $[0,\infty)$, supplied with a Fr\'echet topology by means of the seminorms $p_{k,n}$, for $k,n\in\N_0$, where, for $g\in\S^+$, $p_{k,n}(g)=\sup_{t\in[0,\infty)}t^k|g^{(n)}(t)|$. For $\alpha>-1$, let $\D_\alpha=t^{\alpha/2}\S^+=\{t^{\alpha/2}f : f\in\S^+\}$, and transport the Fr\'echet topology from $\S^+$ to  $\D_\alpha$ via the canonical linear bijection with $\S^+$. As before, we let $\Ddual_\alpha$ be the continuous dual of $\D_\alpha$, and $\langle\dist,g\rangle$ denotes the canonical pairing between $\dist\in\Ddual_\alpha$ and $f\in\D_\alpha$. Since $\D_\alpha$ consists of locally integrable functions, we can view $\D_\alpha$ as a subspace of $\Ddual_\alpha$, by letting $f\in\D_\alpha$ act on $\D_\alpha$ via
\[
\langle f,g\rangle=\int_{0}^\infty f(t)g(t)\,dt\qquad(g\in\D_\alpha).
\]

Consider the Laguerre-type operator
\begin{equation}\label{eq:operatorLaguerre}
T_\alpha=t\frac{d^2}{dt^2} + \frac{d}{dt} - \frac{t}{4}-\frac{\alpha^2}{4t}+\frac{\alpha+1}{2}
\end{equation}
with domain $\D_\alpha$. It is routine to check that $T_\alpha$ leaves $\D_\alpha$ invariant, that $T_\alpha:\D_\alpha\to\D_\alpha$ is continuous, and that, for $f,g\in\D_\alpha$,
\begin{equation*}%\label{eq:symmetryLaguerre}
\langle T_\alpha f,g\rangle=\langle f, T_\alpha g\rangle.
\end{equation*}
We extend $T_\alpha$ from $\D_\alpha\subset\Ddual_\alpha$ to $\Ddual_\alpha$ by
\begin{equation}\label{eq:extensiontodistributionsLaguerre}
\langle T_\alpha\dist,g\rangle=\langle\dist,T_\alpha g\rangle\qquad(\dist\in\Ddual_\alpha,\,g\in\D_\alpha).
\end{equation}

For $n\in\N_0$, let, as in \cite[(1.1.44)]{Tha},
\[
\Lag_n^\alpha(t)=\left(\frac{n!}{\Gamma(n+\alpha+1)}\right)^{\frac{1}{2}}e^{-\frac{t}{2}}t^{\frac{\alpha}{2}}L_n^\alpha(t)\qquad(t> 0)
\]
denote the normalised generalised Laguerre function, where $L_n^\alpha$ is the generalised Laguerre polynomial of order $n$, as in \cite[22.11.6]{AbrSte} or \cite[(1.1.37)]{Tha}.
Then, for $n\in\N_0$, $\Lag_n^\alpha$ is a real-valued element of $\D_\alpha$, and, by \cite[10.12.(11)]{Erd},
\[
T_\alpha \Lag_n^\alpha=-n\Lag_n^\alpha.
\]
For each $\alpha>-1$, $\{\Lag_n^\alpha\}_{n=0}^\infty$ is an orthonormal basis of $L^2([0,\infty),dt)$, see \cite[22.2.12]{AbrSte} or \cite[(1.1.44)]{Tha} for orthonormality, and \cite[5.7.1]{Sze} for completeness.

Let $\S(\N_0)$ be the space of rapidly decreasing sequences on $\N_0$, topologised in the usual way, and define $\F_\alpha:\D_\alpha\to \S(\N_0)$ by
\begin{align}\label{eq:transformfunctionsLaguerre}
\F_\alpha(f)(n)&=\langle f,\Lag_n^\alpha\rangle\\
&=\int_0^\infty f(t)\Lag_n^\alpha(t)\,dt\qquad(f\in\D_\alpha,\,n\in\N_0).\notag
\end{align}
By \cite[Theorem~2.5]{Dur}, $\F_\alpha$ is an isomorphism of topological vector spaces between $\D_\alpha$ and $\S(\N_0)$, and, for $f\in\D_\alpha$, the series
\begin{equation}\label{eq:seriesLaguerre}
\sum_{n=0}^\infty \F_\alpha(f)(n)\Lag_n^\alpha
\end{equation}
converges to $f$ in the topology of $\D_\alpha$.

We extend $\F$ from $\D_\alpha$ to $\Ddual_\alpha$, obtaining an injective map $\F:\Ddual_\alpha\to\prod_{n\in\N_0}\C$, by defining
\[
\F(\dist)(n)=\langle \dist,\Lag_n^\alpha\rangle\qquad(\dist\in\Ddual_\alpha,\,n\in\N_0),
\]
and then
\[
\F(T_\alpha\dist)(n)=-n\F\dist(n)\qquad(\dist\in\Ddual,\,n\in\N_0).
\]

In this example, we will, for $\alpha>-1$ and $1\leq p\leq\infty$, study extensions of the operator $T_\alpha$ on $L^p([0,\infty),dt)$ with original domain $\D_\alpha$, in the situation where actually $\D_\alpha\subset L^p([0,\infty),dt)$. Since $\alpha$ can be negative, this inclusion is not automatic. Furthermore, assumption (A1) necessitates us to require that $\Lag_n^{\alpha}\in L^q([0,\infty),dt)$ ($n\in\N_0$), where $q$ is the conjugate exponent of $p$. This yields another condition on $p$ and $\alpha$. A routine verification shows that, for $\alpha>-1$ and $1\leq p\leq\infty$, the requirement that $\D_\alpha\subset L^p([0,\infty),dt)$ and that simultaneously $\Lag_n^\alpha\in L^q([0,\infty),dt)$ ($n\in\N_0$), is satisfied precisely when \begin{equation}\label{eq:conditionsLaguerre}
\begin{cases}1\leq p\leq\infty&\textup{if }\alpha\geq 0;\\
\frac{2}{\alpha+2}<p<-\frac{2}{\alpha}&\textup{if } -1<\alpha<0.
\end{cases}
\end{equation}
Assuming that \eqref{eq:conditionsLaguerre} is satisfied, we let $X=L^p(\R,dt)$. We view $\D_\alpha$ as a subspace of $X$, and we embed $X$ into $\Ddual_\alpha$ via the pairing
\[
\langle f,g \rangle=\int_{\R} f(t)g(t)\,dt\qquad(f\in X,\, g\in\D_\alpha).
\]
We let $T_{\alpha,c}$ be the operator on $X$ with domain $\D_{\alpha,c}$, consisting of those $f\in X$ such that the distribution $T_\alpha f$, as defined in \eqref{eq:extensiontodistributionsLaguerre}, is in $X$, and defined, for such $f$, by $T_{\alpha,c}f=T_\alpha f$. Then $T_{\alpha, c}$ is a closed extension of $T_\alpha$. Let $T_e$ be an operator on $X$, such that
\begin{equation}\label{eq:extensionLaguerre}
T_\alpha\subset T_e\subset T_{\alpha,c}.
\end{equation}
As in the previous two examples, all assumptions are easily seen to be satisfied, except for assumption (A2) again. To this end, we note that, for all $\alpha>-1$ and $1\leq p<\infty$ such that $\alpha p>-2$, the sequence $\{\Vert \Lag_n^\alpha\Vert_p\}_{n=0}^\infty$ is slowly increasing. This follows from \cite[Lemma~1.5.4]{Tha} for finite $p\neq 4$ and from \cite[Lemma~1.5.3]{Tha} for $p=4$. For $p=\infty$ and $\alpha\geq 0$, this is also true, as follows from \cite[Lemma~1.5.3]{Tha}. In particular, $\{\Vert \Lag_n^\alpha\Vert_p\}_{n=0}^\infty$ is slowly increasing whenever \eqref{eq:conditionsLaguerre} are satisfied. As in the previous examples, combining this with the fact that the coefficients $\F_\alpha(f)(n)$ in \eqref{eq:seriesLaguerre} form a rapidly decreasing sequence whenever $f\in\D_\alpha$ shows that, for such $f$, the series in \eqref{eq:seriesLaguerre} is absolutely convergent in $X$, and with sum $f$, as required. Hence we have the following.

\begin{theorem}\label{thm:summarizingtheoremLaguerre}
Let $\alpha>-1$ and $1\leq p\leq\infty$ satisfy \eqref{eq:conditionsLaguerre}. Let $\D_\alpha=t^{\alpha/2}\S^+$ as above. Let $T_\alpha$ be as in \eqref{eq:operatorLaguerre}, and view $T_\alpha$ as an operator on $L^p(\R,dt)$ with domain $\D$. Let $T_e$ be an extension of $T_\alpha$ on $L^p(\R,dt)$ as in \eqref{eq:extensionLaguerre}. Then $T_e$ has the single-valued extension property.

Let $f\in\D_\alpha$, and let $\F_\alpha f:\N_0\to\C$ be the Laguerre transform of $f$ as in \eqref{eq:transformfunctionsLaguerre}. Then
\begin{equation}\label{eq:localspectralradiusformulaLaguerre}
\lim_{n\to\infty}\Vert T_e^n f\Vert_p^{1/n}=\sup\left\{|z| : z\in \{-n : n\in\N_0,\, \F_\alpha f(n)\neq 0\}\right\}
\end{equation}
in the extended positive real numbers. Moreover,
\[
\sigma_{T_e}(f)=\{-n : n\in\N_0,\, \F_\alpha f(n)\neq 0\},
\]
so that \eqref{eq:localspectralradiusformulaLaguerre} is a local spectral radius formula for $T_e$ at $f$.
\end{theorem}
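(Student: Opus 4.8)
The plan is to verify that the Laguerre setup satisfies the basic assumption together with (A1), (A2), and (A3), and then invoke Theorems~\ref{thm:A1SVEP}, \ref{thm:localspectrum}, and \ref{thm:localspectralradiusformula} verbatim, exactly as was done for the torus and Jacobi cases. Thus the bulk of the work is the pointwise verification of the hypotheses of Section~\ref{sec:resultsandproofs} for each extension $T_e$ satisfying \eqref{eq:extensionLaguerre}, after which the three conclusions of the theorem drop out mechanically. First I would fix $\alpha>-1$ and $1\leq p\leq\infty$ subject to \eqref{eq:conditionsLaguerre}, so that $\D_\alpha\subset L^p([0,\infty),dt)$ and $\Lag_n^\alpha\in L^q([0,\infty),dt)$ for all $n$, and record that the diagonalisation $\F(T_\alpha\dist)(n)=-n\,\F\dist(n)$ already established on $\Ddual_\alpha$ restricts to $\D_e$, giving the basic assumption \eqref{eq:diagonalisingtransform} with $\eig(n)=-n$.

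Next I would check (A1) and (A3). For (A1), continuity of $f\mapsto\F_\alpha f(n)=\langle f,\Lag_n^\alpha\rangle$ on $\D_e$ in the $X$-topology is immediate from $\Lag_n^\alpha\in L^q$ and H\"older's inequality, which is exactly where \eqref{eq:conditionsLaguerre} is used. For (A3), the continuity of the inclusion $\D_\alpha\hookrightarrow X$ follows once $\D_\alpha\subset L^p$ is established (again \eqref{eq:conditionsLaguerre}), and the topological isomorphism $\F_\alpha:\D_\alpha\to\S(\N_0)$ is supplied directly by \cite[Theorem~2.5]{Dur}, with $\familyoffunctions$ the usual polynomial weights $\seminormfunction_k(n)=(1+n)^k$ defining the rapidly decreasing sequences. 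These two assumptions are routine and parallel the earlier examples exactly.

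The only substantive point, as in the Jacobi and Hermite examples, is assumption (A2): for $f\in\D_\alpha$ the series $\sum_{n=0}^\infty\F_\alpha(f)(n)\Lag_n^\alpha$ must converge absolutely in $X$ with sum $f$. I would argue that the coefficients $\{\F_\alpha(f)(n)\}_{n=0}^\infty$ form a rapidly decreasing sequence (part of the isomorphism in \cite[Theorem~2.5]{Dur}), and that the norm sequence $\{\Vert\Lag_n^\alpha\Vert_p\}_{n=0}^\infty$ is slowly increasing, the latter being the genuine input from special function theory: for finite $p\neq 4$ this is \cite[Lemma~1.5.4]{Tha}, for $p=4$ it is \cite[Lemma~1.5.3]{Tha}, and for $p=\infty$ with $\alpha\geq 0$ again \cite[Lemma~1.5.3]{Tha}, the constraint $\alpha p>-2$ being subsumed in \eqref{eq:conditionsLaguerre}. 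The product of a rapidly decreasing sequence with a slowly increasing one is summable, giving absolute convergence; convergence to $f$ in $X$ then follows because the series converges to $f$ in the finer topology of $\D_\alpha$ (by \cite[Theorem~2.5]{Dur}) and $\D_\alpha\hookrightarrow X$ is continuous. This norm estimate is the main obstacle, in the sense that it is the one step not reducible to soft functional analysis and must be imported from the Laguerre-function literature.

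With the basic assumption and (A1), (A2), (A3) all verified, Theorem~\ref{thm:A1SVEP} gives the single-valued extension property for every $T_e$ with $T_\alpha\subset T_e\subset T_{\alpha,c}$; Theorem~\ref{thm:localspectrum} yields $\sigma_{T_e}(f)=[\eig(\supp\F_\alpha f)]^\cl=\{-n:n\in\N_0,\,\F_\alpha f(n)\neq 0\}$ for $f\in\D_\alpha$, where the closure is redundant because the eigenvalues $-n$ form a discrete set with no finite accumulation point; and Theorem~\ref{thm:localspectralradiusformula} delivers the local spectral radius formula \eqref{eq:localspectralradiusformulaLaguerre}. I expect no difficulty in the final assembly, since it is a direct citation of the three general theorems once the hypotheses are in place.
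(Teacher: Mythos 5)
Your proposal is correct and follows essentially the same route as the paper: verify the basic assumption and (A1), (A3) routinely under \eqref{eq:conditionsLaguerre}, isolate (A2) as the one substantive point, and settle it by combining the rapid decay of $\F_\alpha f$ (from \cite[Theorem~2.5]{Dur}) with the slow growth of $\{\Vert\Lag_n^\alpha\Vert_p\}_{n=0}^\infty$ from \cite[Lemma~1.5.3]{Tha} and \cite[Lemma~1.5.4]{Tha}, with exactly the same case split on $p$. The concluding citations of Theorems~\ref{thm:A1SVEP}, \ref{thm:localspectrum}, and \ref{thm:localspectralradiusformula}, and the observation that the closure is redundant for the discrete eigenvalue set $\{-n\}$, match the paper's argument.
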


\subsection{A class of examples on bounded domains}\label{subsec:classofexamples}
In \cite{Zer} the following result is stated: Let $\Omega$ be a bounded domain in $\R^d$ with a Lipschitzian boundary, and let $C^\infty(\overline \Omega)$ denote the space of all infinitely differentiable functions on the closure of $\Omega$, in its usual Fr\'echet topology. Let $w$ be a weight function in $L^1(\Omega,dt)$, such that $w(t)\geq 1$, for $t\in\Omega$. Let $\{p_n\}_{n=0}^\infty$ be an orthonormal basis of $L^2(\Omega, w(t)\,dt)$, consisting of polynomials of a degree which is non-decreasing in $n$. Then the operation of taking Fourier coefficients of $f\in C^\infty(\overline\Omega)$ with respect to the orthonormal basis $\{p_n\}_{n=0}^\infty$ establishes a topological isomorphism between $C^\infty(\overline\Omega)$ and the space $\S(\N_0)$ of rapidly decreasing sequences.\footnote{It is not unusual for spaces of test functions to be topologically isomorphic to (sub)spaces of rapidly decreasing sequences, as is attested by the material in, e.g., \cite{Trie}, \cite{Val}, and \cite{Vog}. However, the precise statement that, for a domain and weight of the type under consideration, taking Fourier coefficients with respect to a polynomial orthonormal basis as indicated yields such an isomorphism, seems to be less widely known than it deserves. Possibly this is related to the fact that the actual detailed proof seems to be only available in French in the local publication \cite{Pav}, which is not at the disposal of the current authors.}

It is obvious from our previous examples that such a topological isomorphism is the non-automatic key ingredient for an application of our results in concrete situations. Hence the result from \cite{Zer} implies a range of examples, as follows: Suppose that $\{p_n\}_{n=0}^\infty$ is an orthonormal basis as indicated. We can assume that these polynomials are real-valued. Let $\D=C^\infty(\overline\Omega)$, with continuous dual $\Ddual$. Suppose that $T$ is a symmetric operator on $L^2(\Omega,w(t)\,dt)$ with invariant domain $C^\infty(\overline\Omega)$, such that $T:\D\to\D$ is continuous, commutes with pointwise conjugation, and has the $p_n$ as eigenfunctions, with (real) eigenvalues $\eig(n)$  $(n\in\N_0)$. We embed $\D$ into $\Ddual$ by letting $f\in\D$ act on $\D$ via
\[
\langle f,g\rangle=\int_\Omega f(t)g(t)w(t)\,dt\qquad(g\in\D).
\]
Then the assumptions on $T$ imply that, for $f,g\in\D$,
\[
\langle Tf,g\rangle=\langle f,Tg\rangle,
\]
so that we can extend $T$ to $\Ddual$ by defining
\begin{equation}\label{eq:extensiontodistributionsZerner}
\langle T\dist,g\rangle=\langle \dist,Tf\rangle\qquad(g\in\D).
\end{equation}
We define $\F:\D\to\S(\N_0)$ (!) by
\begin{align}\label{eq:transformfunctionsZerner}
\F f(n)&=\langle f,p_n\rangle\\
&=\int_\Omega f(t)p_n(t)w(t)\,dt\qquad(f\in\D,\,n\in\N_0),\notag
\end{align}
which, since the $p_n$ are real-valued, is taking the sequence of Fourier coefficients, and extend $\F$ to $\Ddual$ by
\[
(\F\dist)(n)=\langle \dist,p_n\rangle\qquad(\dist\in\Ddual,\,n\in\N_0).
\]
Suppose $1\leq p\leq\infty$. Let $X=L^p(\Omega,w(t)\,dt)$, and view $T$ as an operator on $X$ with domain $\D$. We embed $X$ into $\Ddual$ by defining
\[
\langle f,g\rangle=\int_\Omega f(t)g(t)w(t)\,dt\qquad(f,\in X,\,g\in\D).
\]
As before, we let $T_c$ denote the operator on $X$ with domain $\D_c$, consisting of those $f\in X$ such that the distribution $Tf$, as defined in \eqref{eq:extensiontodistributionsZerner}, is in $X$, and defined, for such $f$, by $T_cf=Tf$. Let $T$ be an operator on $X$, such that
\begin{equation}\label{eq:extensionZerner}
T\subset T_e\subset T_c.
\end{equation}
It is then easily verified that, for $p=2$, the basic assumption and the assumptions (A1), (A2), and (A3) are satisfied. The absolute convergence of the series in (A2) for $f\in\D$ is then automatic, since $\Vert p_n\Vert_2=1$ ($n\in\N_0$). Hence, for $p=2$, results as in the previous examples hold for $T_e$ and $f\in\D$. Actually, however, these results hold for arbitrary $1\leq p\leq\infty$. Indeed, it is not too difficult to see that the fact that $\F:\D\to\S(\N_0)$ is a topological isomorphism implies that the sequence $\{\max_{t\in\Omega}|p_n(t)|\}_{n=0}^\infty$ is necessarily slowly increasing. Hence the same holds for the sequence of norms $\{\Vert p_n\Vert\}_{n=0}^\infty$ in $X=L^p(\Omega,w(t)\,dt)$, and then, as in the previous examples, one sees that assumption (A2) is satisfied as well. Therefore, we have the following, when combining our results with the ones stated in \cite{Zer}.

\begin{theorem}\label{thm:summarizingtheoremZerner}
Let $\Omega$ be a bounded domain in $\R^d$ with Lipschitzian boundary. Let $w(t)\geq 1$ be a weight function on $\Omega$. Let $\{p_n\}_{n=0}^\infty$ be an orthonormal basis of $L^2(\Omega, w(t)\,dt)$, consisting of real-valued polynomials of a degree which is non-decreasing in $n$. Let $\D=C^\infty(\overline\Omega)$, supplied with its usual Fr\'echet topology. Suppose that $T:\D\to\D$ is continuous and commutes with pointwise conjugation, and that it is a symmetric operator on $L^2(\Omega,w(t)\,dt)$. Furthermore, assume that, for $n\in\N_0$, $Tp_n=\eig(n)p_n$ \textup{(}where the eigenvalues $\eig(n)$ are necessarily real\textup{)}.

Let $1\leq p\leq\infty$, and view $T$ as an operator on $L^p(\Omega,w(t)\,dt)$ with domain $\D$. Let $T_e$ be an extension of as in \eqref{eq:extensionZerner}. Then $T_e$ has the single-valued extension property.

Let $f\in\D$, and let $\F f:\N_0\to\C$ be the transform of $f$ as in \eqref{eq:transformfunctionsZerner}. Then
\begin{equation}\label{eq:localspectralradiusformulaZerner}
\lim_{n\to\infty}\Vert T_e^n f\Vert_p^{1/n}=\sup\left\{|z| : z\in \{\eig(n) : n\in\N_0,\, \F f(n)\neq 0\}^\cl\right\}
\end{equation}
in the extended positive real numbers. Moreover,
\[
\sigma_{T_e}(f)=\{\eig(n) : n\in\N_0,\, \F f(n)\neq 0\}^\cl,
\]
so that \eqref{eq:localspectralradiusformulaZerner} is a local spectral radius formula for $T_e$ at $f$.
\end{theorem}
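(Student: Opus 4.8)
The plan is to verify, for the operator $T_e$ regarded on $X=L^p(\Omega,w(t)\,dt)$, that the basic assumption and all three additional assumptions (A1), (A2), and (A3) of Section~\ref{subsec:frameworkandresults} hold, and then to read off the three asserted conclusions from Theorems~\ref{thm:A1SVEP}, \ref{thm:localspectrum}, and \ref{thm:localspectralradiusformula}. I take $\Lambda=\N_0$, each $L_\lambda=\C$, and $G_n\colon\C\to\D$ given by $G_n(z)=zp_n$. The basic diagonalising assumption follows from the symmetry of $T$ together with $Tp_n=\eig(n)p_n$: for $\dist\in\Ddual$ and $n\in\N_0$ the definition \eqref{eq:extensiontodistributionsZerner} gives $\F(T_e\dist)(n)=\langle\dist,Tp_n\rangle=\eig(n)\F\dist(n)$, so $\eig(n)$ is the eigenvalue attached to $n$; injectivity of $\F$ is clear because the $p_n$ span a dense subspace of $L^2(\Omega,w\,dt)$.

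First I would dispose of the two routine assumptions. Assumption (A1) asks that $x\mapsto\F x(n)=\int_\Omega x(t)p_n(t)w(t)\,dt$ be continuous on $\D_e$ in the $L^p$-topology, and this follows from H\"older's inequality with respect to the measure $w\,dt$, since each polynomial $p_n$ is bounded on the compact set $\overline\Omega$ and $w\in L^1(\Omega)$, so that $\|p_n\|_q<\infty$ for the conjugate exponent $q$. Assumption (A3) is essentially Zerner's theorem: the inclusion $\D\hookrightarrow X$ is continuous because elements of $\D$ are bounded and $w\in L^1$, while the assertion that $\F$ is a topological isomorphism of $\D$ onto $\S(\N_0)$, for the family $\familyoffunctions$ of seminorm weights $\seminormfunction_k(n)=(1+n)^k$ $(k\in\N_0)$, is precisely the quoted statement.

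The assumption demanding real work is (A2): for $f\in\D$ the series $\sum_n\F f(n)\,p_n$ must converge absolutely in $X$, with sum $f$. Convergence to $f$ is inherited from the convergence in $\D$ supplied by (A3) together with the continuity of $\D\hookrightarrow X$. Absolute convergence means $\sum_n|\F f(n)|\,\|p_n\|_p<\infty$, and since $\F f\in\S(\N_0)$ is rapidly decreasing it suffices to show that $\{\|p_n\|_p\}_{n=0}^\infty$ is slowly increasing; as $\|p_n\|_p\le(\max_{t\in\overline\Omega}|p_n(t)|)\,\|w\|_1^{1/p}$, it is enough to prove that $\{\max_{t\in\overline\Omega}|p_n(t)|\}_{n=0}^\infty$ is slowly increasing. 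This is the main obstacle, and I expect to clear it directly from the topological isomorphism in (A3). Orthonormality gives $\F p_m(n)=\langle p_m,p_n\rangle=\delta_{mn}$, so $\F^{-1}$ sends the standard basis vector of $\S(\N_0)$ supported at $n$ to $p_n$; applying continuity of $\F^{-1}$ to the sup-norm seminorm on $\D$ yields a constant $C$ and an exponent $k$ with $\max_{t\in\overline\Omega}|p_n(t)|\le C(1+n)^k$ for all $n$, which is the desired polynomial bound.

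With the basic assumption and (A1), (A2), (A3) all in force, the conclusions follow by inspection: Theorem~\ref{thm:A1SVEP} yields the single-valued extension property, Theorem~\ref{thm:localspectrum} identifies $\sigma_{T_e}(f)$ with $[\eig(\supp\F f)]^\cl=\{\eig(n):\F f(n)\neq0\}^\cl$, and Theorem~\ref{thm:localspectralradiusformula} gives the local spectral radius formula \eqref{eq:localspectralradiusformulaZerner}.
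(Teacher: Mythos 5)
Your proposal is correct and follows essentially the same route as the paper: verify the basic assumption and (A1)--(A3) for $X=L^p(\Omega,w(t)\,dt)$ and invoke Theorems~\ref{thm:A1SVEP}, \ref{thm:localspectralradiusexpression}, \ref{thm:localspectrum}, and \ref{thm:localspectralradiusformula}, with the only non-routine point being that $\{\max_{t\in\overline\Omega}|p_n(t)|\}_{n=0}^\infty$ is slowly increasing. Your derivation of that bound --- applying the continuity of $\F^{-1}$ to the basis sequences $\delta_n$, for which $q_{\seminormfunction_k}(\delta_n)=(1+n)^k$ --- is exactly the argument the paper leaves implicit behind the phrase ``it is not too difficult to see'', so you have in fact supplied a detail the paper omits.
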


\begin{remark}
The preceding examples do not overlap with Theorem~\ref{thm:summarizingtheoremZerner}, except the case $-\frac{1}{2}\leq \alpha,\beta\leq 0$ for Jacobi operators in Section~\ref{subsec:Jacobi}.
\end{remark}


\begin{thebibliography}{99}

\bibitem{AbrSte} M.~Abramowitz, I.A.~Stegun, \emph{Handbook of mathematical functions with formulas, graphs, and mathematical tables. Reprint of the 1972 edition}, Dover Publications, New York, 1992.

\bibitem{AdJ1} N.B.~Andersen, M.F.E.~de Jeu, \emph{Real Paley--Wiener theorems and local spectral radius formulas}, {Trans.\ Amer.\ Math.\ Soc.\ }{\bf 362} (2010), 3616--3640.

\bibitem{AdJ2} N.B.~Andersen, M.F.E.~de Jeu, \emph{Local spectral radius formulas on compact Lie groups}, {J.\ Lie\ Theory\ }{\bf 19} (2009), 223--230.

\bibitem{Ba} H.H.~Bang, \emph{A property of infinitely differentiable functions}, {Proc.\ Amer.\ Math.\ Soc.\ }{\bf 108} (1990), 73--76.

\bibitem{CJS} D.~Cicho\'n, I.B.~Jung, J.~Stochel, \emph{Normality via local spectral radii}, J.\ Operator Theory {\bf 61:2} (2009), 253--278.

\bibitem{Dur} A.J.~Dur{\'a}n, \emph{Laguerre expansions of tempered distributions and generalised functions}, J.\ Math.\ Anal.\ Appl.\ {\bf 150} (1990), 166--180.

\bibitem{Erd} A.~Erd{\'e}lyi, W.~Magnus, F.~Oberhettinger, F.G.~Tricomi, \emph{Higher transcendental functions. II}, McGraw-Hill, New York-Toronto-London, 1953.

\bibitem{ErdWang} I.~Erd{\'e}lyi, S.~Wang, \emph{A local spectral theory for closed operators}, Cambridge University Press, Cambridge, 1985.

\bibitem{GlaRun} H.-J~Glaeske, T.~Runst, \emph{The discrete Jacobi transform of generalised functions}, Math.\ Nachr.\ {\bf 132} (1987), 239--251

\bibitem{Helg} S.~Helgason, \emph{Groups and geometric analysis. Integral geometry, invariant differential operators, and spherical functions}, Academic Press, Orlando, 1984.

\bibitem{LaurNeu} K.B.~Laursen, M.M.~Neumann, \emph{An introduction to local spectral theory}, Oxford University Press, New York, 2000.

\bibitem{Pav} M.R.\ Pavec, \emph{Isomorphisme entre $D(\overline\Omega)$ et $(s)$, d'apr\`es une note de M.\ Zerner}, Publications des S\'eminaires de Math\'ematiques (Univ. Rennes, Rennes, ann\'ee 1968--1969), Fasc. 1: S\'eminaires d'Analyse fonctionnelle, Exp.\ No.\ 6, D\'ep.\ Math.\ et Informat., Univ.\ Rennes, Rennes, 1969.

\bibitem{PruPu} B.~Prunaru, M.~Putinar, \emph{The generic local spectrum of any operator is the full spectrum}, {Bull.\ London Math.\ Soc.\ }{\bf 31} (1999), 332--336.

\bibitem{ReeSim} M.~Reed, B.~Simon, \emph{Methods of modern mathematical physics. I. Functional analysis}, Academic Press, San Diego, 1980.

\bibitem{Schw} L.~Schwartz, \emph{Th{\'e}orie des distributions}, Hermann, Paris, 1966.

\bibitem{Sug} M.~Sugiura, \emph{Fourier series of smooth functions on compact Lie groups}, Osaka J.\ Math.\ {\bf 8} (1971), 33--47.

\bibitem{Sze} G.~Szeg{\"o}, \emph{Orthogonal polynomials}, Revised ed., American Mathematical Society, Providence, R.I., 1959.

\bibitem{Tha} S.~Thangavelu, \emph{Lectures on Hermite and Laguerre expansions}, Princeton University Press, Princeton, NJ, 1993.

\bibitem{Tre} F.\ Tr{\`e}ves, \emph{Topological vector spaces, distributions and kernels}, Academic Press, New York-London, 1967.

\bibitem{Trie} H.\ Triebel, \emph{Erzeugung des nuklearen lokalkonvexen Raumes $C^\infty(\overline\Omega)$ durch einen elliptischen Differentialoperator zweiter Ordnung}, Math.\ Ann.\ {\bf 177} (1968), 247--264.

\bibitem{Val}  M.\ Valdivia, \emph{Topics in locally convex spaces}, North-Holland Publishing Co., Amsterdam-New York, 1982.

\bibitem{Vasilescu} F.-H.~Vasilescu, \emph{Analytic functional calculus and spectral decompositions}, Mathematics and its Applications (East European Series), Vol.\ 1, D.\ Reidel Publishing Co., Dordrecht, 1982.

\bibitem{Vasipaper} F.-H.~Vasilescu, \emph{Analytic operators and spectral decompositions}, {Indiana Univ.\ Math.\ J.\ }{\bf 34} (1985), 705--722.

\bibitem{Vog} D.\ Vogt, \emph{Sequence space representations of spaces of test functions and distributions}, in ``Functional analysis, holomorphy, and approximation theory (Rio de Janeiro, 1979)'', 405--443, Lecture Notes in Pure and Appl.\ Math., 83, Dekker, New York, 1983.

\bibitem{Vrb} P.~Vrbova, \emph{On local spectral properties of operators in Banach spaces},
{Czech.\ J.\ Math.\ }{\bf 23} (1973), 483--492.

\bibitem{Zer} M.~Zerner, \emph{D{\'e}veloppement en s{\'e}ries de polyn{\^o}mes orthonormaux des fonctions ind{\'e}finiment diff{\'e}rentiables}, C.\ R.\ Acad.\ Sci.\ Paris S{\'e}r.\ A-B {\bf 268}, 1969, A218--A220.

\end{thebibliography}
\end{document}